\newtheorem{theorem}{Theorem}
\newtheorem{lemma}{Lemma}
\newtheorem{claim}{Claim}
\newtheorem{corollary}{Corollary}
\newtheorem*{proposition*}{Proposition}
\newtheorem{proposition}{Proposition}
\newtheorem{definition}{Definition}
\def\cA{{\mathcal A}}
\def\cB{{\mathcal B}}
\def\cV{{\mathcal V}}
\def\R{{\mathbb R}}
\newcommand{\N}{\mathbb N}
\newcommand{\Z}{\mathbb Z}
\def\ls#1{\langle #1 \rangle}
\def\ignor#1{}
\def\red#1{\mathop{\Longrightarrow}\limits^{#1}}
\def\redr{\red{\rho}}
\def\sred#1{\mathop{\longrightarrow}\limits^{#1}}
\def\sredr{\sred{\rho}}
\begin{document}
\title{$p$-quotients of the G.Higman group}
\author{L. Glebsky}
\maketitle
\begin{abstract}
These notes are based on the mini-course ``On the Graham Higman group'', 
given at the Erwin Schr\"odinger Institute in Vienna, 
January 20, 22, 27 and 29, 2016, as a part of the Measured Group 
Theory program.\footnote{This work was partially supported by 
the European Research Council (ERC) grant no. 259527 of
G. Arzhantseva, part of this work was done in the Nizhny Nivgorod University 
and supported by the RSF (Russia) grant 14-41-00044. The stay in
Vienna was supported by ERC grant no. 259527 of G. Arzhantseva.} 
The main purpose is to describe p-quotients of 
the Higman group $H(k)$ for $p|(k-1)$. 
(One may check that the condition $p|(k-1)$ is necessary for the existence 
of such quotients.)
\end{abstract}
\section{Higman group} 
Consider the Higman group
$H(k)=\ls{a_0,\dots,a_{3}\;|\;\{a^{-1}_ia_{i+1}a_i=a_{i+1}^k,\;i=0,...,3\}}$, 
$i+1$ is taking $\mod 4$ here.
It may be constructed as  successive  amalgamated free products, starting from 
Baumslag-Solitar group 
BS(1,k)=$\langle a_0,a_1\;|\;a^{-1}_0a_{1}a_0=a_{1}^k \rangle$ :
$$
B_3\langle a_0,a_1,a_2\rangle=\langle a_0,a_1\;|\;a^{-1}_0a_{1}a_0=a_{1}^k \rangle
\mathop{*}\limits_{a_1\leftrightarrow a_1}\langle a_1,a_2\;|\;a^{-1}_1a_{2}a_1=a_{2}^k \rangle.
$$
Similarly, one may construct $B_3\langle a_2,a_3,a_0\rangle$ and 
$$
H_k=B_3\langle a_0,a_1,a_2\rangle\mathop{*}_{a_0\leftrightarrow a_0,a_2\leftrightarrow a_2}  
B_3\langle a_2,a_3,a_0\rangle
$$
The group $H(2)$ was introduced by Graham Higman in \cite{Higman}
as an example of group without finite quotients. Still $H(2)$ has a lot
of quotients, moreover, it is SQ-universal,  \cite{Lindon}. Actually the
proof of \cite{Lindon} works for $H(k)$, $k\geq 2$, so, $H(k)$ is SQ-universal
for any $k\geq 2$. Some other techniques that were used for $H(2)$ seem
to be applicable for $H(k)$, see 
\cite{Hoff,Mart}. But $H(k)$ for $k>2$ have another, compared with $H(2)$, 
behavior with respect to finite quotients. Particularly, $H(k)$ has 
an arbitrary large $p$-quotient for $p|(k-1)$.Moreover, the intersection of the kernels of these quotient maps
intersects trivially with the Baumslag-Solitar subgroups $B(1,k)=\langle a_i,a_{i+1}\rangle<H(k)$.  
Using \cite{Hoff} it implies
the following statement:
\begin{proposition*}
Let $p|(k-1)$ be a prime. Then for any $\varepsilon>0$ there is an 
$n\in\Z$ and a bijection $f : \Z/p^n\Z \to \Z/p^n\Z$
such that $f(x + 1) = kf(x)$
for at least $(1 - \varepsilon)p^n$ elements $x$ of $\Z/p^n\Z$ and
$f (f (f (f (x)))) = x$
for all $x\in\Z/p^n\Z$.
\end{proposition*}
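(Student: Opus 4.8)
The Proposition is obtained by feeding the $p$-quotients constructed above into the dictionary of \cite{Hoff}, so the plan splits into three parts: extract the right finitary input from the quotients, run the \cite{Hoff} construction, and control the resulting error term.

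First, by the result recorded above there is a sequence of surjections $\phi_n\colon H(k)\to Q_n$ onto finite $p$-groups with $\bigcap_n\ker\phi_n\cap B(1,k)=\{1\}$, where $B(1,k)=\langle a_0,a_1\rangle\le H(k)$; after passing to a subsequence we may assume the kernels decrease, so that $\phi_n$ is eventually injective on any prescribed finite subset of $B(1,k)$. Since $B(1,k)\cong BS(1,k)$ is torsion-free, this already forces $\mathrm{ord}_{Q_n}(\phi_n(a_1))=p^{m_n}$ with $m_n\to\infty$. Fix $\varepsilon>0$; I will choose $n$ large only at the very end, and meanwhile write $\alpha_i:=\phi_n(a_i)$.

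Next, following \cite{Hoff}, I would pass to the permutation model in which $Q_n$ acts by left translation on $\Omega:=Q_n/\langle\alpha_0\rangle$. Because $\alpha_0^{-1}\alpha_1\alpha_0=\alpha_1^{k}$, the cyclic group $\langle\alpha_1\rangle$ is normal in $\langle\alpha_0,\alpha_1\rangle$, so $\mathcal O:=\{\alpha_1^{x}\langle\alpha_0\rangle\}$ is a single $\langle\alpha_0,\alpha_1\rangle$-orbit; its size is $p^{m'_n}$ for some $m'_n$ with $m_n\ge m'_n\ge m_n-c_0$, $c_0=c_0(k,p)$ a constant, so $m'_n\to\infty$. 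Identifying $\alpha_1^{x}\langle\alpha_0\rangle$ with $x\in\Z/p^{m'_n}\Z$, left translation by $\alpha_1$ becomes $x\mapsto x+1$ and left translation by $\alpha_0$ becomes $x\mapsto k^{-1}x$ --- that is, $\mathcal O$ carries one of the standard finite $p$-quotients of the dynamics of $BS(1,k)$ on $\Z_p$, in which $a_1$ is the unit shift and $a_0$ is multiplication by the $p$-adic unit $k^{-1}$ (here $k\in\Z_p^{\times}$ precisely because $p\mid k-1$). The construction of \cite{Hoff}, which exploits the invariance of the four defining relations of $H(k)$ under the cyclic permutation $a_i\mapsto a_{i+1}$, now produces from this model a self-bijection $f$ of $\mathcal O\cong\Z/p^{m'_n}\Z$ together with the two properties we want: the $\Z/4$-symmetry forces $f^{4}=\mathrm{id}$ at every point of $\mathcal O$ (four turns around the relation cycle close up exactly), while the individual relation coupling $a_1$ to its neighbour forces $f(x+1)=kf(x)$ for every $x$ at which the construction does not cross one of its finitely many ``seams''.

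Finally, I would estimate the seam set $E=\{x:f(x+1)\neq kf(x)\}$. Its cardinality is the number of places at which the bounded-length words occurring in the \cite{Hoff} construction wrap around the finite cyclic orbit $\mathcal O$ rather than acting as they do on $\Z_p$; this number is a constant $C$ determined only by those fixed words --- equivalently by the four relations of $H(k)$ --- once $n$ is large enough for $\phi_n$ to be injective on the (finite) subset of $B(1,k)$ involved, which the first step guarantees. Hence $|E|\le C$, so picking $n$ with $p^{m'_n}>C/\varepsilon$ yields $f(x+1)=kf(x)$ for at least $(1-\varepsilon)p^{m'_n}$ values of $x$; taking the integer of the Proposition to be this $m'_n$ and the bijection to be $f$ finishes the argument. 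The only point that is not bookkeeping is internal to \cite{Hoff}: verifying that off the bounded (hence asymptotically negligible) seam set the four relations genuinely deliver $f(x+1)=kf(x)$ and $f^{4}=\mathrm{id}$ with no further correction, and that the seam count stays bounded as $n\to\infty$. This is exactly where the ``moreover'' clause above --- faithfulness of the $\phi_n$ on $B(1,k)$ to unbounded depth --- enters, and it is the crux of the matter.
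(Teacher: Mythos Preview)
Your approach matches the paper's exactly: the paper does not give a standalone proof of the Proposition but simply records the two inputs --- arbitrarily large $p$-quotients of $H(k)$ and the fact that the intersection of their kernels meets each $BS(1,k)=\langle a_i,a_{i+1}\rangle$ trivially --- and then writes ``Using \cite{Hoff} it implies'' the statement. Your three-step outline (extract finite quotients with unbounded $\alpha_1$-order, feed them into the Helfgott--Juschenko machine, bound the seam set by a constant) is precisely this, with more of the \cite{Hoff} mechanism spelled out than the paper itself provides; the one cautionary remark is that the details you sketch for how $f$ arises from the coset action on $Q_n/\langle\alpha_0\rangle$ are your own reconstruction of \cite{Hoff} rather than something the present paper verifies, so their correctness stands or falls with that reference.
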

The interesting property of this $f$ is that it behaves ``almost like'' 
a modular exponent ($x\to ak^x\mod p^n$), but all it's cycles are of the length
$4$. Precisely, $f(x)=a(x)k^x\mod p^n$ where $a(x)=a(x+1)$ for almost all
$x$ ($a(x)$ is ``almost a constant''). Maybe, the existence of such functions
explains the difficulty in proving estimates for the number of small cycles in
repeated modular exponentiation, \cite{GS}. The case $k=2$ is not follows from this notes. 
So, the existence of $f$ for
$k=2$ is an open question.  

Let $X$ be a group (or other algebraic system), $x\in X$ and $\phi:X\to Y$ be 
a homomorphism. We systematically, abusing notations, will write $x$ to denote 
$\phi(x)$
if it is clear from the context that we are dealing with an element of $Y$. 
If there are $\phi_j:X\to Y_j$ we may say ``$x$ of $Y_j$'' to denote $\phi_j(x)$.    
      
\section{$p$-quotients of a group and it's $p$-central series.}\label{sec_p-quotient}

Let $G$ be a group. For $S\subseteq G$ let $\langle S\rangle$ denote the subgroup
of $G$ generated by $S$.
For $g,h\in G$ let $[g,h]=g^{-1}h^{-1}gh$ denote the commutator of $g$ and $h$.
For $H_1,H_2<G$ let $[H_1,H_2]=\langle \{[g,h]\;|\;g\in H_1,\;h\in H_2\}\rangle <G$ denote 
the commutator subgroup of $H_1$ and $H_2$.
The $p$-central series $G_1,G_2,\dots$ of a group $G$ is defined as
$$
G_1=G,\;\;G_{i+1}=G_i^p[G_i,G].
$$ 
It is clear by the definition that $G/G_{i+1}$ is the maximal $p$-quotient of $G$ of
$p$-class at most $i$. There is another, equivalent, definition of $G_k$. Let
$G_{[i]}$ be the lower central series for $G$:
$$
G_{[1]}=G,\;\; G_{[i+1]}=[G_{[i]},G].
$$

{\bf Exercise 1.} Show that $G_n=\langle G_{[i]}^{p^j},\;i+j=n\rangle$. Hint. Using the commutator 
identities (Theorem 5.1 (Witt-Hall identities) of \cite{Mag1}) show that
$$
[u,v^p]=[u,v]^p\cdot [[u,v],v]\cdot [[u,v^2],v]\cdot \dots \cdot [[u,v^{p-1}],v].
$$ 
Particularly, this implies  
that $[G_{[i]},G_{[j]}^{p^k}]\subseteq [G_{[i]},G_{[j]}^{p^{k-1}}]^p[G_{[i+j]},G_{[j]}^{p^{k-1}}]$. 
Show that $[G,G^{p^k}_{[i]}]\subseteq \langle G^{p^j}_{[r]},\; r+j=i+k+1\rangle$. 
Then apply induction on $n$.

\section{Calculating of $p$-quotients.}
Let $\Z_{p^n}=\Z/p^n\Z$. Consider the non-commutative ring
$\Z_{p^n}[\bar x]$ of polynomials with non-commutative (but associative) variables 
$\bar x =(x_0,\dots,x_m)$ over $\Z_{p^n}$. The ring $\Z_{p^n}[\bar x]$ contains finite 
subring
$\Z_{p^n}[p\bar x]$ of polynomials $f(p\bar x)$. It is clear that each monomial
of $f(p\bar x)$ of order $k$ is divisible by $p^k$. Inside of 
$\Z_{p^n}[p\bar x]$ there is a group $\Gamma=\ls{(1+px_0),\dots,(1+px_m)}$, generated
by $(1+px_i)$. Notice, that 
$$
(1+px)^{-1}=\sum_{j=0}^{n-1}(-p)^jx^j
$$
\begin{lemma}\label{lm_power}
 $(1+px_i)^{p^n}=1$ in $\Z_{p^n}[\bar x]$. In other words $j\to (1+px_i)^j$ is a function
 $\Z_{p^n}\to\Z_{p^n}[\bar x]$.
\end{lemma}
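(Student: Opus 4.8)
The plan is to observe that the non-commutativity of $\Z_{p^n}[\bar x]$ is irrelevant here, reduce to a one-variable computation in an ordinary polynomial ring, and then control the $p$-adic valuations of the binomial coefficients that appear. First I would note that $1+px_i$ generates, together with $1$, a \emph{commutative} subring of $\Z_{p^n}[\bar x]$: it is the image of the usual polynomial ring $\Z[x]$ under the homomorphism that reduces coefficients modulo $p^n$ and sends $x\mapsto x_i$. So it suffices to show $(1+px)^{p^n}\equiv 1\pmod{p^n}$ in $\Z[x]$, i.e. that every coefficient of $(1+px)^{p^n}-1$ is divisible by $p^n$.

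Expanding by the binomial theorem, $(1+px)^{p^n}-1=\sum_{j=1}^{p^n}\binom{p^n}{j}p^jx^j$, so I must check $p^n\mid\binom{p^n}{j}p^j$ for every $j\ge 1$. For $j\ge n$ the factor $p^j$ already does the job. For $1\le j<n$ I would invoke the classical identity $v_p\!\left(\binom{p^n}{j}\right)=n-v_p(j)$, where $v_p$ denotes the $p$-adic valuation: it follows from Kummer's theorem, since $(j-1)+(p^n-j)=p^n-1$ has base-$p$ expansion $(p-1,\dots,p-1)$ and therefore adding these two numbers produces no carries, which forces $\binom{p^n-1}{j-1}$ to be a $p$-adic unit and hence $\binom{p^n}{j}=\tfrac{p^n}{j}\binom{p^n-1}{j-1}$ to have valuation $n-v_p(j)$. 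Since $j\ge p^{v_p(j)}\ge v_p(j)+1$, this gives $v_p\!\left(\binom{p^n}{j}p^j\right)=n-v_p(j)+j\ge n+1$, completing the argument; the final assertion that $j\mapsto(1+px_i)^j$ is a well-defined function $\Z/p^n\Z\to\Z_{p^n}[\bar x]$ is then immediate.

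A variant of the second step, which avoids quoting Kummer, is to prove by induction on $k\ge 1$ that $(1+px)^{p^{k-1}}=1+p^kh_k(x)$ for some $h_k\in\Z[x]$: the case $k=1$ is trivial, and raising to the $p$-th power yields $(1+px)^{p^k}=(1+p^kh_k)^p=1+\sum_{j=1}^{p}\binom{p}{j}p^{kj}h_k^j$, in which every summand is divisible by $p^{k+1}$ (for $1\le j\le p-1$ because $p\mid\binom pj$ and $kj\ge k$, and for $j=p$ because $kp\ge k+1$). Taking $k=n+1$ gives $(1+px)^{p^n}=1+p^{n+1}h_{n+1}(x)\equiv 1\pmod{p^n}$; in fact this already shows the stronger statement $(1+px_i)^{p^{n-1}}=1$.

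I do not expect a genuine obstacle: the only point requiring any care is the opening reduction — making explicit that a single generator is involved, so that the ambient non-commutative ring behaves commutatively for this computation — together with, in the first approach, recalling the exact $p$-adic valuation of $\binom{p^n}{j}$; the rest is bookkeeping.
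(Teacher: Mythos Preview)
Your proposal is correct. The paper states this lemma without proof, treating it as an elementary observation, so there is no argument to compare against; both of your routes are sound, and the inductive variant in particular is clean and, as you observe, even yields the stronger conclusion $(1+px_i)^{p^{n-1}}=1$ in $\Z_{p^n}[\bar x]$.
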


\begin{lemma}[Jacobson]\label{lm_free}
$\Gamma$ is isomorphic to $F/F_{n}$ where $F$ is a free group of rank $m+1$.
\end{lemma}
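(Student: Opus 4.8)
The plan is to realise the isomorphism through the tautological surjection
$$
\psi\colon F\longrightarrow\Gamma,\qquad a_i\longmapsto 1+px_i ,
$$
a well-defined homomorphism because $F$ is free, and onto by the definition of $\Gamma$. (I write $\psi=\psi_n$ when the dependence on $n$ matters, and $\psi_{n-1}$ for the analogous map into $\Z_{p^{n-1}}[p\bar x]$; reduction of coefficients gives a ring map $\Z_{p^n}[p\bar x]\to\Z_{p^{n-1}}[p\bar x]$ intertwining them, so $\ker\psi_n\subseteq\ker\psi_{n-1}$.) The whole content of the lemma is then the equality $\ker\psi=F_n$, which I would establish by the two inclusions.

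\emph{The inclusion $F_n\subseteq\ker\psi$.} Let $\mathfrak m$ be the two-sided ideal of $\Z_{p^n}[p\bar x]$ generated by $px_0,\dots,px_m$. Each monomial of $\mathfrak m^i$ is a product of at least $i$ of the $px_k$, so $\mathfrak m^i\subseteq p^i\,\Z_{p^n}[p\bar x]$, and in particular $\mathfrak m^n=0$. From $(1+s)(1+t)-(1+t)(1+s)=st-ts$ one gets the standard estimate $[\,1+\mathfrak m^a,\,1+\mathfrak m^b\,]\subseteq 1+\mathfrak m^{a+b}$ for commutators of units, hence $\Gamma_{[i]}\subseteq 1+\mathfrak m^i\subseteq 1+p^i\,\Z_{p^n}[p\bar x]$. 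So for $w\in F_{[i]}$ we may write $\psi(w)=1+p^i\rho$ for some $\rho$ in the ring, and then, for every $j\ge0$,
$$
\psi(w)^{p^j}=\sum_{l=0}^{p^j}\binom{p^j}{l}\,p^{il}\,\rho^{\,l},
$$
whose $l$th term ($l\ge1$) has $p$-adic valuation $(j-v_p(l))+il\ge i+j$ because $v_p(l)\le\log_p l\le l-1\le i(l-1)$. Thus when $i+j=n$ every term with $l\ge1$ dies in $\Z_{p^n}[p\bar x]$ and $\psi(w^{p^j})=1$; by Exercise~1 such $w^{p^j}$ generate $F_n$, so $F_n\subseteq\ker\psi$ and $\psi$ descends to a surjection $\bar\psi\colon F/F_n\twoheadrightarrow\Gamma$.

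\emph{The inclusion $\ker\psi\subseteq F_n$} (i.e.\ injectivity of $\bar\psi$). Here I would use the Magnus embedding $\mu\colon F\hookrightarrow\Z\langle\langle\bar x\rangle\rangle$, $a_i\mapsto 1+x_i$, and write $\mu(w)=1+\sum_{k\ge1}P_k(w)$ with $P_k(w)=\sum_{|\alpha|=k}P_\alpha(w)x^\alpha$ its homogeneous degree-$k$ part, $P_\alpha(w)\in\Z$. Since $\psi$ equals $\mu$ followed by $x_i\mapsto px_i$ and reduction mod $p^n$, and the degree-$k$ piece of $\Z_{p^n}[p\bar x]$ is a direct sum of copies of $\Z/p^{\,n-k}\Z$, one reads off
$$
\ker\psi=\bigl\{\,w\in F:\ P_\alpha(w)\equiv0\ \bmod p^{\,n-|\alpha|}\ \text{whenever}\ |\alpha|<n\,\bigr\}.
$$
With the first inclusion this puts $F_n$ inside the right-hand side; one wants equality, by induction on $n$. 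The displayed description makes $\ker\psi_n\subseteq\ker\psi_{n-1}=F_{n-1}$ immediate, so since $F_{n-1}/F_n$ is elementary abelian and central in $F/F_n$ it remains to prove the induced $\mathbb F_p$-linear map $F_{n-1}/F_n\to\Gamma$ injective. For $v\in F_{n-1}$ one checks that $\psi(v)$ lies in the elementary abelian subgroup of $\Gamma$ of elements $1+\sum_{1\le|\alpha|<n}c_\alpha\,p^{\,n-1}x^\alpha$ (each $p^{\,n-1}x^\alpha$ being the order-$p$ element of the $\alpha$-component $\Z/p^{\,n-|\alpha|}\Z$ of the ring), with $\alpha$-coordinate $c_\alpha=\bigl(P_\alpha(v)/p^{\,n-1-|\alpha|}\bigr)\bmod p$, and that $\{v\in F_{n-1}:\psi(v)=1\}=\ker\psi_n$. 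To finish one uses the Magnus--Witt theorem: $F_{[i]}/F_{[i+1]}$ is free abelian, identified by $w\mapsto P_i(w)$ with the degree-$i$ part $\Lambda_i$ of the free Lie ring on $x_0,\dots,x_m$; hence the generating subgroup $F_{[i]}^{\,p^{\,n-1-i}}$ of $F_{n-1}$ contributes to $\Gamma$ its ``leading Lie term'' $\Lambda_i/p\Lambda_i$ in the $x$-degree-$i$ coordinates, and since the indices $i=1,\dots,n-1$ occupy pairwise distinct $x$-degrees one peels $v$ off modulo $F_n$ degree by degree, giving injectivity.

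The main obstacle is exactly this ``degree by degree'' step. An element of $F_{n-1}$ is a product of factors from the various $F_{[i]}^{\,p^{\,n-1-i}}$, and the higher-order tails of the low-index factors, the cross terms in such products, and the lower-order corrections in the Hall--Witt identities (cf.\ the identity for $[u,v^p]$ in the hint to Exercise~1) all contribute to the higher $x$-degree coordinates; one must show these contributions never destroy the triangularity that lets $v$ be read off. That is precisely the information packaged in the classical structure theorem (in the spirit of Stallings and Lazard) identifying $\bigoplus_i F_i/F_{i+1}$ for the \emph{free} group with the free graded Lie $\mathbb F_p$-algebra on $m+1$ generators equipped with a degree-raising operation $\pi$ ($p$-th power in the group), subject to compatibility relations with the bracket — in particular $\pi(x+y)=\pi x+\pi y+[x,y]$ when $p=2$; granting that theorem, the computation above is routine. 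A route avoiding it is a cardinality count: $\bar\psi$ is a surjection of finite $p$-groups, so it suffices to check $|\Gamma|\ge|F/F_n|$, e.g.\ by producing enough elements of $\Gamma$ from a Hall basis of $F/F_{[n]}$; but bounding $|F/F_n|$ from above again needs the same bookkeeping.
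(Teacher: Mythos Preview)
Your outline is correct and matches the paper's (Jacobson's) proof in its skeleton: define the surjection $\psi$, get $F_n\subseteq\ker\psi$ from the commutator/$p$-power estimates, and prove the reverse inclusion by induction on $n$ using the Magnus embedding. The difference lies precisely in the step you flag as the obstacle.

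You propose to resolve the ``degree by degree'' peeling by invoking the Stallings--Lazard identification of $\bigoplus F_i/F_{i+1}$ with the free restricted Lie $\mathbb F_p$-algebra. The paper avoids this heavier input. Instead of analysing all of $F_{n}/F_{n+1}$ at once, it takes a single $w\in F_n\setminus F_{n+1}$, locates the unique $i$ with $w\in F_{[i]}\setminus F_{[i+1]}$, and writes its Magnus expansion as $w=1+p^{j}d_i(p\bar x)+(\text{higher $\bar x$-degree})$ with $d_i\in\Lambda^i$ and $i+j\ge n$. The crucial point is item~4 of Magnus' theorem: any $d_i\in\Lambda^i$ lifts to some $u=1+d_i(p\bar x)+\cdots\in F_{[i]}$. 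Then $w_1:=u^{-p^{j}}w$ lies in $F_{[i']}$ for some $i'>i$, and one iterates. The cross terms and tails that worried you are simply absorbed into the successive remainders $w_1,w_2,\dots$, whose lower-central index strictly increases, so the process terminates with $w=u_1^{p^{j_1}}\cdots u_k^{p^{j_k}}w_k$, $w_k\in F_{[n+1]}$, each $i_r+j_r\ge n$. If all inequalities were strict one would get $w\in F_{n+1}$; hence some $i_r+j_r=n$, and since the $i_r$ are distinct the corresponding leading Lie terms $p^{j_r}d_{i_r}(p\bar x)$ sit in different $\bar x$-degrees and cannot cancel, giving $w\notin N_{(p^{n+1})}$.

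So your sketch is sound, but the paper's route is more economical: it needs only Magnus' theorem (specifically the lifting statement $\Lambda^i\to F_{[i]}/F_{[i+1]}$ is onto) rather than the full structure of the $p$-lower central quotients. Your appeal to the restricted Lie algebra would work too, but is overkill here.
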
 
We prove this lemma in Section~\ref{sec_lm_free}.
Let $I$ be a (two-sided) ideal in $\Z_{p^n}[\bar x]$ and 
$\phi:\Z_{p^n}[\bar x]\to \Z_{p^n}[\bar x]/I$ be a natural map. Let
$\Gamma_I=\phi(\Gamma)$. We are going to use $\Gamma_I$ for calculating $G/G_n$
as follows. Let 
$$
G=\ls{a_0,\dots,a_m\;|\; u_i(a_0,\dots,a_m)=w_i(a_0,\dots,a_m),\;i=0,\dots,k}.
$$
Let 
$g_i=p^{-\alpha_i}(u_i(1+px_0,\dots,1+px_k)-w_i(1+px_0,\dots,1+px_k))$, where $p^{\alpha_i}$ 
devides all coefficients of $u_i(1+px_0,\dots,1+px_k)-w_i(1+px_0,\dots,1+px_k)$ and 
being maximal with this property.
Consider 
$I=I(g_i,\;i=0,\dots,k)$, an ideal
in $\Z_{p^n}[\bar x]$ generated by $g_i$. 

\begin{lemma}\label{lm_G}
$\Gamma_I$ is a homomorphic image of $G/G_{n}$.
\end{lemma}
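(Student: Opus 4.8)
The plan is to build the surjection $G/G_{n}\twoheadrightarrow\Gamma_I$ starting from the free group and successively imposing relations: first pass from $F$ to $F/F_n\cong\Gamma$, then to $\Gamma_I$, and finally check that the defining relators of $G$ are killed along the way. Let $F=\langle a_0,\dots,a_m\rangle$ be free of rank $m+1$ and $\pi:F\to G$ the natural surjection, so $N:=\ker\pi$ is the normal closure in $F$ of the elements $u_i(a)\,w_i(a)^{-1}$, $i=0,\dots,k$. I would first record the routine functoriality of the $p$-central series under surjections: since $\pi$ is onto, an induction on $i$ using $G_{i+1}=G_i^{p}[G_i,G]$ gives $\pi(F_i)=G_i$ for all $i$; hence $G_n=F_nN/N$ and therefore $G/G_n\cong F/(F_nN)$.

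Next, consider the ring homomorphism $\Psi:F\to\Z_{p^n}[\bar x]$ determined by $a_j\mapsto 1+px_j$, which is well defined because each $1+px_j$ is a unit (with inverse $\sum_{l}(-p)^{l}x_j^{l}$ as in the excerpt). Its image is exactly $\Gamma$, and by Lemma~\ref{lm_free} the induced map $\bar\Psi:F/F_n\xrightarrow{\ \sim\ }\Gamma$ is an isomorphism. Composing with $\phi:\Z_{p^n}[\bar x]\to\Z_{p^n}[\bar x]/I$ gives a surjection $\phi\circ\Psi:F\to\Gamma_I$ that kills $F_n$ (it factors through $\Gamma$), hence descends to a surjection $F/F_n\twoheadrightarrow\Gamma_I$.

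The heart of the matter is to show this last map also kills the relators, i.e.\ that $\phi\circ\Psi$ sends each $u_i(a)\,w_i(a)^{-1}$ to $1$. By the choice of $g_i$ we have $u_i(1+p\bar x)-w_i(1+p\bar x)=p^{\alpha_i}g_i\in I$, so $\phi\bigl(u_i(1+p\bar x)\bigr)=\phi\bigl(w_i(1+p\bar x)\bigr)$ in $\Z_{p^n}[\bar x]/I$ (this is immediate even in the degenerate case where the difference already vanishes in $\Z_{p^n}[\bar x]$). Since $w_i(1+p\bar x)$ is a product of the units $1+px_j$ and their inverses, its image in the quotient ring is a unit, and multiplying by that inverse yields $\phi\circ\Psi\bigl(u_i w_i^{-1}\bigr)=1$. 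Because $\Gamma_I$ is the image of $F$ under $\phi\circ\Psi$, the image of the normal closure $N$ is the normal closure in $\Gamma_I$ of these (now trivial) elements, hence trivial. Therefore $\phi\circ\Psi$ factors through $F/(F_nN)\cong G/G_n$, producing the desired surjection $G/G_n\twoheadrightarrow\Gamma_I$.

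I do not expect a serious obstacle here: the only genuinely nontrivial input, the isomorphism $F/F_n\cong\Gamma$, is Lemma~\ref{lm_free}, proved separately in Section~\ref{sec_lm_free}; the ring computation that the relators die is forced by the very definition of the $g_i$. The one place to be careful is bookkeeping — stating cleanly the identification $G/G_n\cong F/(F_nN)$ (functoriality of the $p$-central series) and the fact that a homomorphism carries the normal closure of a set to the normal closure of its image inside the image group.
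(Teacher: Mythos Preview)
Your argument is correct, but it is considerably more elaborate than the paper's and leans on machinery that is not actually needed here. The paper's proof is two sentences: the assignment $a_i\mapsto 1+px_i$ defines a homomorphism $G\to\Gamma_I$ because the ideal $I$ was manufactured precisely so that each relation $u_i=w_i$ holds in $\Z_{p^n}[\bar x]/I$; then one checks directly that $\phi(G_n)=\{1\}$, which follows from the elementary observations $(1+p\cdot\ast)^p\in 1+p^2\cdot\ast$ and $[1+p\cdot\ast,\,1+p\cdot\ast]\in 1+p^2\cdot\ast$ in $\Z_{p^n}[p\bar x]/I$, so that the $n$-th term of the $p$-central series lands in $1+p^n\cdot\ast=\{1\}$.

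By contrast, you route everything through the free group and invoke the full isomorphism $F/F_n\cong\Gamma$ of Lemma~\ref{lm_free}. That lemma is the hard (Jacobson/Magnus) statement, and you only use its trivial direction here --- namely that $F_n$ maps to $1$ in $\Gamma$ --- which is exactly the ``easy to check'' step the paper alludes to. So your detour through $G/G_n\cong F/(F_nN)$ and the normal-closure bookkeeping is sound but unnecessary: it repackages the same verification while citing a much stronger result than the situation demands. Nothing is wrong, but you should be aware that Lemma~\ref{lm_free} is not a genuine input to this particular lemma.
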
 
\begin{proof}
By construction $\Gamma_I$ is a homomorphic image of $G$ under $\phi:a_i\to (1+px_i)$.
It is easy to check that $\phi(G_{n})=\{1\}$.
\end{proof}
When $\Gamma_I$ is isomorphic to $G/G_n$?
Probably the answer is the following: if $p\neq 2$ then $\Gamma_I$ is isomorphic
to $G/G_n$; for $p=2$ there are $G$ such that $\Gamma_I$ is not isomorphic to $G/G_n$.  
 Probably, it is well known. Otherwise, one   
may try to use similar technique as in the proof of Lemma~\ref{lm_free} 
(see, Section~\ref{sec_lm_free}) 
taking into account the solution of the dimension subgroup problem.
See \cite{Gupta} and the bibliography therein for the dimension subgroup problem.
   
\section{$p$-quotients of $H(k)$, $p|(k-1)$.} 
$H(k)=\ls{a_0,\dots,a_{3}\;|\;\{a_{i+1}a_i=a_ia_{i+1}^k,\;i=0,...,3\}}$,
here $i+1$ is taken $\mod 4$. This is a presentation of the Higman group without 
inversion. 
Let $p|(k-1)$ then 
substituting $a_i=(1+px_i)$ leads
\begin{equation}\label{eq_g}
g_i=\frac{1}{p^2}(a_{i+1}a_i-a_ia_{i+1}^k)=
x_{i+1}x_i-x_ix_{i+1}+Q_0(x_{i+1})+pQ_1(x_i,x_{i+1}).
\end{equation}
Our aim is to study $I=I(g_0,g_1,g_2,g_3)$ in $\Z_{p^n}[x_0,\dots,x_3]$, or
precisely, $\Z_{p^n}[x_0,\dots,x_3]/I$ and $\Gamma_I$. 
To this end we introduce some notions. 
\begin{definition}\label{def_algebra}
A (non-commutative) ring $A$ is called to be an algebra over $\Z_{p^n}$ if 
\begin{itemize}
 \item $A$ has the unity $1\in A$.
 \item Multiplication is $\Z_{p^n}$-bilineal.
\end{itemize}
An algebra $A$ is tame if
$A$ is a free $\Z_{p^n}$-modul with a free basis $\cA\ni 1$.
\end{definition}
All algebras over $\Z_{p^n}$ we deal with are tame. So, in what follows we use
just ``algebra'' to denote ``tame algebra''. 
\begin{definition}\label{def_zappa-szep_alg}
Let $A$ (resp. $B$) be a $\Z_{p^n}$-algebra  and  $\cA\ni 1$ (resp. $\cB\ni 1$) 
be a set of it's free generators as a $\Z_{p^n}$-modul. 
A Zappa-Szep product $C=A\bowtie B$ is a $\Z_{p^n}$ algebra such that 
\begin{itemize}
 \item $C$ contains isomorphic copies of $A$ and $B$ such that 
$A\cap B=\Z_{p^n}\cdot 1$ (we have fixed such a  copies of $A$ and $B$  
and denote them by the same letters).
 \item The set $\{ab\;|\;a\in\cA,\;b\in\cB\}$, as well as the set 
$\{ba\;|\;a\in\cA,\;b\in\cB\}$, forms a free basis of $C$ as a $\Z_{p^n}$-modul.
\end{itemize}
\end{definition}
{\bf Exercise 2.} It looks that by definition one should say that $C=A\bowtie B$
with respect to $\cA$ and $\cB$. 
Show that $C=A\bowtie B$ with respect to any free bases  $\cA'\ni 1$ and $\cB'\ni 1$.
\begin{definition}\label{def_zappa_szep_group}
 Let $K,L$ be a groups. A Zappa-Szep product $G=K\bowtie L$ of groups $K$ and $L$ is a 
group such that
 \begin{itemize}
  \item $G$ contains isomorphic copies of $K$ and $L$. We fix such copies and assume that 
   $K,L<G$.
  \item $K\cap L=\{1\}$ in $G$.
  \item $G=KL$. (This easily implies that $G=LK$.) 
 \end{itemize}
\end{definition}
{\bf Remark.} 
Of course, the definitions do not imply that a Zappa-Szep product is uniquely defined by 
a pair of algebras o groups. Really, in order to define $X\bowtie Y$ uniquely 
(up to isomorphism) one needs a  function $com:X\times Y\to Y\times X$ which describe
how the elements  of $X$ commute with elements of $Y$. So, in some sense, $Z=X\bowtie Y$
is an abuse of notation. In any case, when we use $Z=X\bowtie Y$, the structure of $Z$
will be described.   
\begin{theorem}\label{th_algebra}
 Let $I=I(g_0,g_1,g_2,g_3)$ be an ideal in $\Z_{p^n}[\bar x]$ generated by $g_i$ of 
Eq.(\ref{eq_g}). 
 Then $\Z_{p^n}[\bar x]/I= \Z_{p^n}[x_0,x_2]\bowtie\Z_{p_n}[x_1,x_3]$.
\end{theorem}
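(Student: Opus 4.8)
The plan is to prove the normal‑form statement: that $R:=\Z_{p^n}[\bar x]/I$ is a free $\Z_{p^n}$‑module with basis the set of \emph{$A$‑first monomials} $\{uv\}$, where $u$ runs over words in $x_0,x_2$ and $v$ over words in $x_1,x_3$. Granting this, the symmetric basis $\{vu\}$ comes for free from the cyclic automorphism $\sigma$ of $\Z_{p^n}[\bar x]$ with $\sigma(x_i)=x_{i+1}$: it permutes $\{g_0,g_1,g_2,g_3\}$, hence fixes $I$, hence descends to a ring (so in particular $\Z_{p^n}$‑module) automorphism of $R$ which interchanges the two subalgebras and carries $A$‑first monomials to $B$‑first ones. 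From the free‑basis statement one also reads off that $\Z_{p^n}[x_0,x_2]$ and $\Z_{p^n}[x_1,x_3]$ embed into $R$ (their monomial bases go to the linearly independent subsets $\{u\cdot 1\}$, $\{1\cdot v\}$) with intersection $\Z_{p^n}\cdot 1$, which is exactly Definition~\ref{def_zappa-szep_alg}, the commutation being dictated by the $g_i$. Before starting it helps to rewrite $g_i$: putting $Q_0(x):=\big((1+px)-(1+px)^k\big)/p^2$ (a polynomial in $\Z_{p^n}[x]$ with zero constant term, since $p\mid k-1$) one checks
\[
g_i \;=\; x_{i+1}x_i-x_ix_{i+1}+(1+px_i)\,Q_0(x_{i+1}),
\]
so that $pQ_1(x_i,x_{i+1})=px_iQ_0(x_{i+1})$; in particular every mixed monomial of $g_i$ other than $x_{i+1}x_i$ and $x_ix_{i+1}$ is divisible by $p$.

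For spanning I would orient each relation as a rewriting rule moving a letter of $\{x_1,x_3\}$ rightward past an adjacent letter of $\{x_0,x_2\}$: $x_bx_a\rightsquigarrow x_ax_b+(\text{correction})$ for the four pairs $(b,a)\in\{(1,0),(1,2),(3,0),(3,2)\}$, the correction being $\mp(1+px_?)Q_0(x_?)$ taken from the relevant $g_i$. The irreducible words are precisely the $A$‑first monomials, and since all four left‑hand sides have length $2$, begin with a letter of $\{x_1,x_3\}$ and end with a letter of $\{x_0,x_2\}$, no two of them overlap — there are no ambiguities — so by the diamond lemma for ring rewriting it is enough to show the reduction is Noetherian. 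I would weight a monomial $w$ carrying a nonzero coefficient $c\in\Z_{p^n}$ by $(n-1-v_p(c),\,\iota(w))\in\N\times\N$, lexicographically, where $\iota(w)$ counts pairs of positions $s<t$ in $w$ with the $s$‑th letter in $\{x_1,x_3\}$ and the $t$‑th in $\{x_0,x_2\}$, and a general element by the multiset of these weights. One step at an occurrence of, say, $x_1x_0$ replaces it by $x_0x_1$ (same coefficient, $\iota$ down by $1$), by the degree‑$1$ term of $Q_0(x_1)$ (same valuation but lower degree, so $\iota$ strictly drops), by the degree‑$\ge 2$ part of $Q_0(x_1)$ (divisible by $p$ when $p$ is odd, since $p\mid\binom kj$ for $2\le j<p$ and $p^{\,j-2}$ takes over for $j\ge p$, so the valuation rises), and by $p\,x_0Q_0(x_1)$ (divisible by $p$); in every case the weight drops. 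Hence the rewriting terminates, is confluent, and the $A$‑first monomials span $R$.

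For linear independence I would build the regular representation of the hoped‑for product directly. Let $M$ be the free $\Z_{p^n}$‑module on the $A$‑first monomials and define $\Z_{p^n}$‑linear endomorphisms $\rho(x_0),\dots,\rho(x_3)$: for $t\in\{0,2\}$, $\rho(x_t)$ prepends $x_t$ to the $A$‑part; for $t\in\{1,3\}$, $\rho(x_t)(uv)=x_tv$ if $u$ is empty, and if $u=x_su'$ with $x_s\in\{x_0,x_2\}$ then $\rho(x_t)(uv)=\rho(x_s)\big(\rho(x_t)(u'v)\big)$ plus the letter‑by‑letter action of the correction polynomial on $u'v$. This recursion is well‑founded for the measure $\big(n-v_p(\text{accumulated scalar}),\ \text{length of the }A\text{‑part}\big)$: unwinding $(1+px_?)Q_0(x_?)$, each recursive call to a $\rho(x_1)$ or $\rho(x_3)$ is either on a word with strictly shorter $A$‑part at the same scalar, or carries an extra factor $p$. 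Since $\Z_{p^n}[\bar x]$ is free on $x_0,\dots,x_3$ this gives a module action of $\Z_{p^n}[\bar x]$ on $M$, and each $g_i$ acts as $0$: on $g_i\cdot(uv)$, note $\rho(x_a)(uv)$ is an $A$‑first monomial whose $A$‑part starts with $x_a$, so the defining clause for $\rho(x_b)$ there computes $\rho(x_b)\rho(x_a)(uv)$ as $\rho(x_a)\rho(x_b)(uv)$ plus the correction, cancelling the remaining terms of $g_i$. Thus $M$ is an $R$‑module, and the orbit map $R\to M$, $r\mapsto r\cdot 1$, sends each $A$‑first monomial $uv$ to the corresponding basis vector of $M$ (the letters of $v$, then those of $u$, act by successive prepends from the empty word); so the $A$‑first monomials are $\Z_{p^n}$‑independent in $R$, and with the previous paragraph they are a basis — which finishes the argument.

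The step I expect to be the main obstacle is termination — of both the reduction above and the $\rho$‑recursion. It is clean for odd $p$ precisely because then the degree‑$\ge 2$ part of $Q_0$ is $p$‑divisible, so every degree‑raising correction also raises the $p$‑valuation. For $p=2$ this fails: e.g. for $k=3$ one gets $Q_0(x)\equiv x+x^2\pmod 2$, a genuine quadratic, and a rewriting step can transiently raise the degree without touching the valuation, so neither the weight nor the recursion measure above works verbatim. For $p=2$ I would instead run the whole construction through the $p$‑adic filtration $M\supset pM\supset\cdots\supset p^nM=0$: define $\rho(x_t)$ on $M/p^qM$ by induction on $q$, arranging that the $p$‑divisible part of each correction is evaluated using only the already‑built action on $M/p^{q-1}M$, and then verify that the residual mod‑$p^q$ recursion really terminates — which amounts to checking that the transient degree growth telescopes in characteristic $2$. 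That bookkeeping is the delicate point.
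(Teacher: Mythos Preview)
Your approach is genuinely different from the paper's. The paper first proves the two–variable case $A_0=\Z_{p^n}[x_0,x_1]/I(g_0)=\Z_{p^n}[x_0]\bowtie\Z_{p^n}[x_1]$ by a Knuth--Bendix/diamond argument (Proposition~\ref{prop_A0}), and then \emph{assembles} the four–variable statement by successive amalgamated free products of algebras, mirroring the amalgam structure of $H(k)$ itself. You instead run the rewriting on all four variables at once, observe that the four left–hand sides $x_1x_0,\,x_1x_2,\,x_3x_2,\,x_3x_0$ have no overlaps (each begins with a $B$–letter and ends with an $A$–letter), and use the cyclic automorphism $x_i\mapsto x_{i+1}$ to pass from the $A$–first basis to the $B$–first one. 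For odd $p$ this is correct and pleasantly direct; the ``no overlaps'' observation replaces the whole amalgamation machinery, and the symmetry argument for the second basis is neat. (Your separate module construction is then essentially redundant: once termination and local confluence hold, Newman's lemma plus the argument of Lemmas~\ref{lm_lin}--\ref{lm_ideal} already give that the irreducible monomials form a free basis.)

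The genuine gap is $p=2$, and it is not only a matter of bookkeeping. In the paper's two–variable reduction the termination weight is the triple $(|t|,\,n_0(t),\,\mathrm{def}(t))$; the point is that every correction term of the single rule $x_1x_0\to x_0x_1-(1+px_0)Q_0(x_1)$ is a polynomial in $x_1$ alone (or $x_0$ times such, but then with an extra $p$), so the count $n_0$ of $x_0$'s never increases without the $p$–valuation rising. This works uniformly in $p$. In your direct four–variable system that feature is lost: the rules coming from $g_1$ and $g_3$ have corrections $(1+px_1)Q_0(x_2)$ and $(1+px_3)Q_0(x_0)$, i.e.\ polynomials in an $A$–letter, so for $p=2$ a single step at $u(x_1x_2)w$ can produce $u(-\binom{k}{2}x_2^2)w$ with odd coefficient and strictly larger inversion count $\iota$; no weight of the form $(\text{valuation},\ \iota)$ or $(\text{valuation},\ n_A)$ or $(\text{valuation},\ n_B)$ will decrease. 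Exactly the same obstruction bites your module recursion: computing $\rho(x_1)(x_2\cdot v)$ spawns, via the $Q_0(x_2)$ part of the correction, basis vectors $x_2^j v$ with longer $A$–part and no extra $p$ when $j=2$ and $\binom{k}{2}$ is odd, so the measure $(n-v_p,\ |A\text{--part}|)$ does not drop. Your proposed fix via the $p$–adic filtration is the right direction but is not carried out; as written, the proof is complete only for odd $p$. The cleanest repair is exactly the paper's: prove the two–variable case first (where the asymmetry between $x_0$ and $x_1$ makes $n_0$ a genuine invariant for all $p$) and then amalgamate.
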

Recall, that $G_i$ denotes the $i$-th term of the $p$-central series of group $G=G_0$. 
Also we suppose that $p|(k-1)$. 
\begin{corollary}\label{cor_group}
 There is a surjective  homomorphism $H(k)/H_n(k)\to F/F_n\bowtie F/F_n$, 
where $F$ is a free group of rank $2$.
\end{corollary}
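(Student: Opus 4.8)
The plan is to run the presentation of $H(k)$ through Lemma~\ref{lm_G} and then extract the structure from Theorem~\ref{th_algebra}. First observe that for the relations $a_{i+1}a_i=a_ia_{i+1}^k$ the substitution $a_i\mapsto 1+px_i$ makes $a_{i+1}a_i-a_ia_{i+1}^k$ divisible by exactly $p^2$ (this is where $p\mid(k-1)$ enters), so the normalized polynomials of Lemma~\ref{lm_G} are precisely the $g_i$ of Eq.(\ref{eq_g}) and the ideal they generate is the ideal $I$ of Theorem~\ref{th_algebra}. Hence Lemma~\ref{lm_G} supplies a surjection $H(k)/H_n(k)\twoheadrightarrow\Gamma_I$, where $\Gamma_I=\langle 1+px_0,\dots,1+px_3\rangle\subseteq C:=\Z_{p^n}[\bar x]/I$, and it suffices to identify $\Gamma_I$ with a Zappa-Szep product of two copies of $F/F_n$.

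By Theorem~\ref{th_algebra}, $C=A\bowtie B$ with $A=\Z_{p^n}[x_0,x_2]$ and $B=\Z_{p^n}[x_1,x_3]$. Put $\Gamma_A:=\langle 1+px_0,1+px_2\rangle\subseteq A$ and $\Gamma_B:=\langle 1+px_1,1+px_3\rangle\subseteq B$, so that $\Gamma_I=\langle\Gamma_A,\Gamma_B\rangle$. Since each of $A,B$ is a free non-commutative $\Z_{p^n}$-polynomial ring in two variables, Lemma~\ref{lm_free} (Jacobson) gives $\Gamma_A\cong\Gamma_B\cong F/F_n$ with $F$ free of rank $2$, and because $A$ and $B$ embed in $C$ these isomorphisms hold inside $C$. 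It remains to prove $\Gamma_I=\Gamma_A\bowtie\Gamma_B$, i.e.\ (a) $\Gamma_A\cap\Gamma_B=\{1\}$ and (b) $\Gamma_I=\Gamma_A\Gamma_B$.

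For (a): the constant-term map $\Z_{p^n}[\bar x]\to\Z_{p^n}$ annihilates each $g_i$ (both sides of each defining relation have constant term $1$), so it descends to a ring homomorphism $\bar\epsilon\colon C\to\Z_{p^n}$ with $\bar\epsilon(1+px_s)=1$, whence $\bar\epsilon\equiv 1$ on $\Gamma_I$; as $\Gamma_A\cap\Gamma_B\subseteq A\cap B=\Z_{p^n}\cdot 1$ by Definition~\ref{def_zappa-szep_alg} and $\bar\epsilon(c\cdot1)=c$, it follows that $\Gamma_A\cap\Gamma_B=\{1\}$. For (b): the multiplication map $\Gamma_A\times\Gamma_B\to C$ is injective by (a), so $|\Gamma_A\Gamma_B|=|\Gamma_A|\,|\Gamma_B|$; since $\Gamma_A\Gamma_B\subseteq\Gamma_I$, equality (b) is equivalent to $\Gamma_A\Gamma_B$ being closed under multiplication, i.e.\ to $ba\in\Gamma_A\Gamma_B$ for all $a\in\Gamma_A$, $b\in\Gamma_B$. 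The mechanism for this is to rewrite $ba$ by pushing the $\{a_1,a_3\}$-letters to the right of the $\{a_0,a_2\}$-letters using the relations: for the two ``aligned'' pairs the relation $a_{i+1}a_i=a_ia_{i+1}^k$ effects the move with no residue, while for the other two it gives $a_1a_2=a_2a_1a_2^{\,1-k}$ and $a_3a_0=a_0a_3a_0^{\,1-k}$, whose residues $a_2^{\,1-k}$, $a_0^{\,1-k}$ are $(1-k)$-th powers and hence, because $p\mid(k-1)$, lie one term deeper in the $p$-central series of $\Gamma_A$. Feeding these residues back into the rewriting recurses on that depth, which terminates since $(\Gamma_A)_n=\{1\}$, and at each stage the outcome is again a single product in $\Gamma_A\Gamma_B$.

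I expect step (b) to be the main obstacle: organizing it as a clean (nested) induction — say, a downward induction on the $p$-central depth of the $\Gamma_A$-argument with an inner induction on word length — and, crucially, checking that the normalization never leaves the group $\Gamma_A\Gamma_B$ (the analogous rewriting inside $H(k)$ itself merely loops). What makes termination actually provable is the passage to the $p$-central quotient: one uses the genuine normal form furnished by the proof of Theorem~\ref{th_algebra}, together with $(\Gamma_A)_n=(\Gamma_B)_n=\{1\}$ and the boundedness of degrees in $\Z_{p^n}[\bar x]$ (as $p^n=0$). Granting (b), composing the identification $\Gamma_I=\Gamma_A\bowtie\Gamma_B$ with the surjection from Lemma~\ref{lm_G} yields the desired surjective homomorphism $H(k)/H_n(k)\to F/F_n\bowtie F/F_n$.
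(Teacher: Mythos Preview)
Your overall architecture matches the paper's: surject onto $\Gamma_I$ via Lemma~\ref{lm_G}, identify $\Gamma_A\cong\Gamma_B\cong F/F_n$ via Lemma~\ref{lm_free} inside the Zappa--Szep decomposition of Theorem~\ref{th_algebra}, and then check the two conditions for $\Gamma_I=\Gamma_A\bowtie\Gamma_B$. Your treatment of (a) via the augmentation is fine and essentially what the paper asserts.

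The divergence is at (b), and here you miss the key simplification that the paper uses. You try to push $a_1$ past $a_2$ by writing $a_1a_2=a_2a_1a_2^{\,1-k}$ and then arguing that the residue $a_2^{\,1-k}$ sits one step deeper in the $p$-central filtration, setting up a recursion you admit you have not organized. The paper avoids this entirely. By Lemma~\ref{lm_power} the generators $a_i$ have order dividing $p^n$, so exponents may be taken in $\Z_{p^n}$; and since $p\mid(k-1)$ the integer $k$ is a \emph{unit} in $\Z_{p^n}$. Hence the relation $a_2a_1=a_1a_2^{\,k}$ (conjugation by $a_1^{-1}$ sends $a_2\mapsto a_2^{\,k}$) can simply be inverted to $a_1a_2=a_2^{\,k^{-1}}a_1$, and more generally
\[
a_1^{\,m}a_2^{\,r}=a_2^{\,rk^{-m}}a_1^{\,m},\qquad a_3^{\,m}a_0^{\,r}=a_0^{\,rk^{-m}}a_3^{\,m},
\]
while the aligned pairs give $a_1^{\,m}a_0^{\,r}=a_0^{\,r}a_1^{\,mk^{r}}$ and $a_3^{\,m}a_2^{\,r}=a_2^{\,r}a_3^{\,mk^{r}}$. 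The hypothesis $p\mid(k-1)$ is used a second time to ensure $r\mapsto k^{r}$ is a well-defined map $\Z_{p^n}\to\Z_{p^n}$. These four identities rewrite any $(\text{odd})(\text{even})$ block into $(\text{even})(\text{odd})$ form in a single step with no residue, so $\Gamma_B\Gamma_A\subseteq\Gamma_A\Gamma_B$ follows by a straightforward left-to-right sweep through a word. In short: rather than recursing on $p$-depth of a leftover term, just invert $k$.
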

\begin{proof}
By Lemma~\ref{lm_G} $H(k)/H_n(k)$ surjects into $\Gamma_I$. 
By Theorem~\ref{th_algebra} and 
Lemma~\ref{lm_free} there are $S,\tilde S<\Gamma_I$ such that
\begin{itemize}
 \item $S=\ls{a_0=(1+px_0),a_2=(1+px_2)}$, $\tilde S=\ls{a_1=(1+px_1),a_3=(1+px_3)}$;
 \item $S$ and $\tilde S$ are isomorphic to $F/F_n$;
 \item $S\cap\tilde S=\{1\}$;
\end{itemize}
It follows that $|S\tilde S|=|S|\cdot|\tilde S|$ and 
$S\tilde S\leq\Gamma_I$. So, we have to prove that 
$\Gamma_I\subset S\tilde S$.
To this end it suffices to show that in $\Gamma_I$ there exist relations removing appearance of  
$a_1^{m}a_0^{r}$, $a_1^{m}a_2^{r}$,  $a_3^{m}a_0^{r}$  and $a_3^{m}a_2^{r}$. 
By Lemma~\ref{lm_power} we may assume that $m,r\in\Z_{p^n}$.
The relation $a_1 a_0=a_0 a_1^k$ implies $a^m_1 a_0^r=a_0^r a_1^{mk^r}$. 
The relation $a_2 a_1=a_1 a_2^k$ implies $a_1^m a_2^r=a_2^{rk^{-m}}a_1^m$.
Considerations for the other indexes are the same.
Notice here, that the condition $p|(k-1)$ implies that $r\to k^r$ is 
a well definite function $\Z_{p^n}\to\Z_{p^n}$.  
\end{proof}
\section{What is going on}

Consider  $w\in H_k$ as a word $a_{i_1}^{n_1}a_{i_2}^{n_2}\dots$ but now
suppose that $n_j$ are in some commutative ring, in our example, 
$n_j\in\Z_{p^n}$. We get a new group $\tilde H_k$ wich is a homomorphic
image of $H_k$.  
 Now, the relation $a_1^k=a_0^{-1}a_1a_0$ implies as well
the relation $a_1^{{1}/{k}}=a_0a_1a_0^{-1}$. 
We also need that $j\to k^j$ is well defined $\mod p^n$.
After that
any element of $\tilde H_k$ may be written as $wu$, where
$w=a_0^{n_1}a_2^{m_1}\dots a_2^{m_j}$, $n_i,m_i\in\Z_{p^n}$ and, 
similarly, $u=u(a_1,a_3)$. So, we have 
$\tilde H_k=(\Z_{p^n}*\Z_{p^n})\bowtie(\Z_{p^n}*\Z_{p^n})$
The whole problem is to show that the natural homomorphism 
$\Z_{p^n}*\Z_{p^n}\to F/F_n$ is compatible with the $\bowtie$ structure. (
Here $F$
is a rank 2 free group). Which, probably, may be done another way as well...
  
Let restate all it more formally.
On can easy to check that if $G_1=H_1\bowtie K$ and $G_2=H_2\bowtie K$ then
$G_1\mathop{*}\limits_{K=K}G_2=(H_1*H_2)\bowtie K$. For $H_k$,
$p|(k-1)$ we have the following:
\begin{itemize}
\item $BS(1,k)\to BS_{p^n}(1,k)=\Z_{p^n}\ltimes_k\Z_{p^n}$,
\item $B_3\to BS_{p^n}\mathop{*}\limits_{Z_{p^n}} BS_{p^n}=(\Z_{p^n}*\Z_{p^n})\bowtie\Z_{p^n}$, 
where we amalgamate the different factors of $BS_{p^n}$. 
\item Finally we get $H_k\to (\Z_{p^n}*\Z_{p^n})\bowtie (\Z_{p^n}*\Z_{p^n})$.
\end{itemize}

For the case $k=2$ we may do similar things. We need a unitary commutative ring 
$R$ such  that function $r\to 2^r$ is defined in $R$. Such a ring exists, for
example the real numbers $\R$. 
So, we may embed $BS(1,2)\hookrightarrow \R\ltimes_2\R$, where
for $(\alpha_i,\beta_i)\in \R\ltimes_2\R$ the multiplication is defined as
$(\alpha_1,\beta_1)(\alpha_2,\beta_2)=(\alpha_1+\alpha_2,2^{\alpha_2}\beta_1+\beta_2)$.
Similarly, there is a homorphism $B_3\to (\R*\R)\bowtie\R$, which is not injective, but
nontrivial. And finally we obtain $H_2\to H_{\R}=(\R*\R)\bowtie(\R*\R)$. 
(Any element of $(\R*\R)\bowtie(\R*\R)$ is of the form $wu$, 
$w=a_{0}^{\alpha_1}a_2^{\beta_1}\dots a_2^{\beta_k}$, $\alpha_i\beta_i\in \R$ and, similarly, 
$u=u(a_1,a_3)$.) Actually, $H_k$ has a nontrivial homomorphic image in $H_{\R}$ for any
$k\in\Z$.

What is the structure of $H_\R$? Changing $a_i\to a_i^\delta$ we may write
$$
H_\R=\langle a_i^\alpha,\; i=0,\dots,3,\;\alpha\in\R\;|\;a_i^{-\alpha}a_{i+1}^{\beta}a_i^{\alpha}=
a_{i+1}^{\beta\exp(\alpha)}\rangle.
$$ 
It is not hard to show that $\langle a_0^\alpha,a_1^\alpha,a_2^\alpha,a_3^\alpha\rangle<H_\R$
is residually finite for all $\alpha\not\in Y$ for a countable set $Y\subset\R$.
\section{Proof Theorem~\ref{th_algebra}}

The algebra $\Z_{p^n}[\bar x]/I$ may be constructed as amalgamated free 
products, similar to the construction of the Higman group itself.
\begin{definition}
Let $A$, $B$ be $\Z_{p^n}$-algebras, $\cA\ni 1$, $\cB\ni 1$ their corresponding 
free generators as $\Z_{p^n}$-modules. A free product $A*B$ is a $\Z_{p^n}$-algebra
that
\begin{itemize}
\item Generated by $1$ and alternating words of letters from 
$\cA'=\cA\setminus \{1\}$ and $\cB'=\cB\setminus\{1\}$ as a free $\Z_{p^n}$-module. 
A word $w=w_1w_2\dots w_r$ is alternating if 
$(w_i,w_{i+1})\in \cA'\times\cB'\cup\cB'\times\cA'$.
\item  It suffices to define product for two alternating words $w=w_1\dots w_r$ and 
$u=u_1\dots u_k$. So, $wu=w_1\dots w_ru_1\dots u_k$ if $(w_r,u_1)$ alternating and
$wu=w_1\dots w_{r-1}(w_r\cdot u_1)u_2\dots u_r$ otherwise. Here $w_r\cdot u_1$ is the 
product in $A$ or $B$.
\end{itemize} 
\end{definition}
For example, $\Z_{p^n}[x]*\Z_{p^n}[x]=\Z_{p^n}[x_0,x_1]$.

{\bf Exercise 3.}
Check that $A*B$ is well defined, independent of $\cA$ and $\cB$,
contains isomorphic copies of $A$ and $B$ with intersection equals to
$\Z_{p^n}\cdot 1$  and satisfies the universal property of 
free product of algebras.

\begin{definition}
Let $C_1=A\bowtie V$ and $C_2=B\bowtie V$. We define $C=C_1\mathop{*}\limits_{V}C_2$ as
$C=(A*B)\bowtie V$. Let $\cA$, $\cB$ and $\cV$ be free bases of $A$, $B$, and $V$,
correspondingly. Any element is a $\Z_{p^n}$-combination of $wv$, where $w$ is 
alternating word of letters from $\cA'$, $\cB'$  and $v\in\cV$. In order
to define multiplication on $C$ it suffices to define $vw$ as a combination of $w_iv_i$.
It could be done using Zappa-Zsep product structure of $C_1$ and $C_2$. Suppose,
for example that $w=a_1b_1\dots a_kb_k$ then
$$
va_1b_1\dots a_kb_k=\sum_{i_1}a^1_{i_1}v_{i_1}b_1\dots a_kb_k=
\sum_{i_1,j_1}a^1_{i_1}b^1_{i_1,j_1}v_{i_1,j_1}\dots a_kb_k=
$$   
$$
\sum_{i_1,j_1,\dots,i_k,j_k}a^1_{i_1}b^1_{i_1,j_1}\dots a^k_{i_1,\dots j_{k-1},i_k}b^k_{i_1,\dots,j_k}
v_{i_1,\dots,j_k} 
$$
\end{definition}
{\bf Exercise 4.}
Check that $C_1\mathop{*}\limits_VC_2$ is well defined, contains isomorphic copies of
$C_1$ and $C_2$ such that $C_1\cap C_2=V$.
Check that $C_1\mathop{*}\limits_VC_2$ satisfies the universal property of amalgamated 
free products of algebras. 

Consider $A_0=\Z_{p^n}[x_0,x_1]/I(g_0)$, where $I(g_0)$ is the ideal generated by $g_0$ of
Eq.(\ref{eq_g}).
\begin{proposition}\label{prop_A0}
$A_0=\Z_{p^n}[x_0]\bowtie\Z_{p^n}[x_1]$
\end{proposition}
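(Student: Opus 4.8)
The plan is to show directly that $A_0=\Z_{p^n}[x_0,x_1]/I(g_0)$ is a Zappa--Szep product of the subalgebras generated by $x_0$ and by $x_1$, by producing a normal form for elements of $A_0$. Recall from Eq.~(\ref{eq_g}) that $g_0 = x_1x_0 - x_0x_1 + Q_0(x_1) + pQ_1(x_0,x_1)$; the key observation is that, because $p|(k-1)$, the polynomial $a_1a_0-a_0a_1^k$ (with $a_i = 1+px_i$) when divided by $p^2$ has the shape $x_1x_0 = x_0x_1 - Q_0(x_1) - pQ_1(x_0,x_1)$, i.e.\ modulo $I(g_0)$ one can rewrite any occurrence of $x_1x_0$ as a $\Z_{p^n}$-combination of monomials in which no $x_1$ stands immediately to the left of an $x_0$. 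So the first step is to make this precise: define a rewriting rule $x_1x_0 \mapsto x_0x_1 - Q_0(x_1) - pQ_1(x_0,x_1)$ and check that the set $\cA_0 = \{\, x_0^i x_1^j : i,j\ge 0 \,\}$ spans $A_0$ as a $\Z_{p^n}$-module.

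The second step, which is really the heart of the matter, is to prove that $\cA_0$ is in fact a \emph{free} basis --- i.e.\ that the rewriting does not collapse anything and that $x_0^i x_1^j$ remain linearly independent over $\Z_{p^n}$ in the quotient. The clean way is to exhibit an explicit $\Z_{p^n}$-linear section: build a $\Z_{p^n}$-module $M$ with basis $\{x_0^ix_1^j\}$, define a left action of $x_0$ and of $x_1$ on $M$ (the action of $x_0$ is multiplication moving $x_0$ to the front; the action of $x_1$ uses the rewriting rule to push past any $x_0$'s), check these two actions make $M$ into a $\Z_{p^n}[x_0,x_1]$-module on which $g_0$ acts as $0$, hence $M$ is an $A_0$-module, and observe that the cyclic vector $1\in M$ generates $M$ freely --- which forces $\dim$ (more precisely, the module structure) of $A_0$ over $\Z_{p^n}$ to be at least that of $M$, matching the spanning set and giving freeness. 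Equivalently one can phrase this as: the rewriting system $x_1x_0 \to x_0x_1 - Q_0(x_1) - pQ_1(x_0,x_1)$ is confluent (a single overlap, trivially resolvable since there is only one rule and the left-hand side has no self-overlap), so by the diamond lemma the irreducible monomials $x_0^ix_1^j$ form a basis. I expect the diamond-lemma route to be the shortest.

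Granting that $\cA_0=\{x_0^ix_1^j\}$ is a free $\Z_{p^n}$-basis, the third step is to read off the Zappa--Szep structure. Let $A=\Z_{p^n}[x_0]\hookrightarrow A_0$ and $B=\Z_{p^n}[x_1]\hookrightarrow A_0$; these injections are genuine because $\{x_0^i\}$ and $\{x_1^j\}$ are sub-families of the free basis, so $A\cap B = \Z_{p^n}\cdot 1$. The free basis $\{x_0^ix_1^j\}$ is exactly $\{ab : a\in\cA, b\in\cB\}$ with $\cA=\{x_0^i\}$, $\cB=\{x_1^j\}$; and applying the rewriting rule in the other direction (or simply using the symmetry of the relation, since $x_0x_1 = x_1x_0 + Q_0(x_1) + pQ_1(x_0,x_1)$ lets one push $x_0$'s to the right past $x_1$'s) shows that $\{ba : a\in\cA, b\in\cB\} = \{x_1^jx_0^i\}$ is also a free basis. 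That is precisely Definition~\ref{def_zappa-szep_alg}, so $A_0 = \Z_{p^n}[x_0]\bowtie\Z_{p^n}[x_1]$.

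The main obstacle is the freeness claim in step two: it is easy to believe the monomials $x_0^ix_1^j$ span, but one must rule out hidden $\Z_{p^n}$-linear relations among them in the quotient, and because we are over $\Z_{p^n}$ rather than a field, dimension-counting is replaced by checking that a concrete module realization has the claimed basis. The $p$-adic factors $pQ_1$ do not cause trouble here --- they only lower the "$x_1$-degree bookkeeping," so termination of the rewriting is still clear --- but they do mean the verification that $g_0$ annihilates the model module $M$ is a short computation one has to actually do rather than wave at. Once Proposition~\ref{prop_A0} is in hand, the analogous statements $A_i = \Z_{p^n}[x_i]\bowtie\Z_{p^n}[x_{i+1}]$ follow by relabeling, and Theorem~\ref{th_algebra} is assembled from them via the amalgamated-free-product construction $C_1 *_V C_2 = (A*B)\bowtie V$ described above.
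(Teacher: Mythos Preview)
Your approach is essentially the paper's own: reduce Proposition~\ref{prop_A0} to a rewriting system with the single rule $x_1x_0 \to x_0x_1 - Q_0(x_1) - pQ_1(x_0,x_1)$, establish termination and local confluence, invoke the diamond (Newman) lemma to conclude that the left-reduced monomials $x_0^ix_1^j$ form a free $\Z_{p^n}$-basis of $A_0$, and then repeat on the other side for $\{x_1^jx_0^i\}$. One small caution: termination is less automatic than you suggest, since $Q_0(x_1)$ may \emph{raise} the $x_1$-degree; the paper handles this with a lexicographic order on the triple $(|t|,\,n_0(t),\,\mathrm{def}(t))$ (here $|t|$ is the $p$-height of the coefficient, $n_0(t)$ the number of $x_0$'s, and $\mathrm{def}(t)$ the number of out-of-order $(x_1,x_0)$ pairs) together with Dickson's lemma, rather than any single degree count.
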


We prove the proposition in Section~\ref{sub_propA0}.
Now let as show how Theorem~\ref{th_algebra} follows from Proposition~\ref{prop_A0}.
Let $A_1=\Z[x_1,x_2]/I(g_1)$. Clearly, $A_1$ is isomorphic to $A_0$, so
$A_1=\Z_{p^n}[x_1]\bowtie\Z_{p^n}[x_2]$. Consider 
$A_{01}=A_0\mathop{*}_{\Z_{p^n}[x_1]} A_1$. Notice, that the roles 
of $\Z_{p^n}[x_1]$ in $A_0$ and $A_1$ are different, precisely  
the isomorphism from $A_0$ to $A_1$
maps $\Z_{p^n}[x_1]$ to $\Z_{p^n}[x_2]$ not to $\Z_{p^n}[x_1]$ of $A_1$.
\begin{lemma}\label{lm_A01}
$A_{01}=\Z_{p^n}[x_0,x_2]\bowtie\Z_{p^n}[x_1]$ is isomorphic to 
$\Z_{p^n}[x_0,x_1,x_2]/I(g_0,g_1)$. 
\end{lemma}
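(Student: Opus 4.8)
Write $R := \Z_{p^n}[x_0,x_1,x_2]/I(g_0,g_1)$. The plan is to produce mutually inverse algebra homomorphisms between $A_{01}$ and $R$, using the universal property of the amalgamated free product of $\Z_{p^n}$-algebras established in Exercises~3 and~4. First note that the identity $A_{01} = \Z_{p^n}[x_0,x_2]\bowtie\Z_{p^n}[x_1]$ is merely the definition of $\mathop{*}$ for algebras unwound: since $A_0 = \Z_{p^n}[x_0]\bowtie\Z_{p^n}[x_1]$ by Proposition~\ref{prop_A0} and $A_1 = \Z_{p^n}[x_2]\bowtie\Z_{p^n}[x_1]$ by the isomorphism $A_0\cong A_1$ (here I use that Definition~\ref{def_zappa-szep_alg} is symmetric in the two factors, so the common factor $\Z_{p^n}[x_1]$ may be taken as the right-hand factor of both), we get $A_{01}=(\Z_{p^n}[x_0]*\Z_{p^n}[x_2])\bowtie\Z_{p^n}[x_1]=\Z_{p^n}[x_0,x_2]\bowtie\Z_{p^n}[x_1]$, using $\Z_{p^n}[x_0]*\Z_{p^n}[x_2]=\Z_{p^n}[x_0,x_2]$. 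What remains is the isomorphism with $R$.

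For the map $\bar\Phi\colon R\to A_{01}$: because $\Z_{p^n}[x_0,x_1,x_2]$ is the free $\Z_{p^n}$-algebra on $x_0,x_1,x_2$, sending $x_i$ to the element $x_i$ of $A_{01}$ (lying in the fixed copy of $A_0$ when $i\in\{0,1\}$ and of $A_1$ when $i\in\{1,2\}$) defines an algebra homomorphism $\Z_{p^n}[x_0,x_1,x_2]\to A_{01}$; since $g_0$ involves only $x_0,x_1$ its image equals that of $g_0$ under $\Z_{p^n}[x_0,x_1]\to A_0 = \Z_{p^n}[x_0,x_1]/I(g_0)\hookrightarrow A_{01}$, hence is $0$, and similarly $g_1\mapsto 0$, so the homomorphism descends to $\bar\Phi$. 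For the inverse $\Psi\colon A_{01}\to R$: the composite $\Z_{p^n}[x_0,x_1]\hookrightarrow\Z_{p^n}[x_0,x_1,x_2]\twoheadrightarrow R$ sends $g_0$ (hence $I(g_0)$) to $0$, yielding an algebra homomorphism $\alpha\colon A_0\to R$; symmetrically one gets $\beta\colon A_1\to R$; and $\alpha,\beta$ restrict to the same map on the common subalgebra $\Z_{p^n}[x_1]$ (both send $x_1$ to $x_1$), so by the universal property of $A_{01}=A_0\mathop{*}_{\Z_{p^n}[x_1]}A_1$ they induce $\Psi$. Both $R$ and $A_{01}$ are generated as $\Z_{p^n}$-algebras by $x_0,x_1,x_2$ (for $A_{01}$, because it is generated by the copies of $A_0$ and $A_1$, themselves generated by $x_0,x_1$ and by $x_1,x_2$), and $\bar\Phi$, $\Psi$ both fix these generators, so $\Psi\circ\bar\Phi$ and $\bar\Phi\circ\Psi$ are the respective identities; this proves the lemma.

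I do not expect a serious obstacle here: the real content is already packaged in Proposition~\ref{prop_A0} (which turns $A_0$, and hence $A_1$, into a genuine Zappa-Szep product) and in the well-definedness and universal property of the amalgamated free product from Exercises~3 and~4. The points requiring a little care are keeping track of which copy of $\Z_{p^n}[x_1]$ sits in which position inside $A_0$ versus $A_1$, checking that $\alpha$ and $\beta$ are indeed well defined on the quotients, and making sure that the commutation rules $A_{01}$ inherits from $A_0$ and $A_1$ coincide with those of $R$ --- but the last point is automatic once $\bar\Phi$ is known to be an isomorphism of algebras rather than just of $\Z_{p^n}$-modules. If one preferred to avoid the universal property, there is a direct route: use $g_0,g_1$ to rewrite every monomial of $\Z_{p^n}[x_0,x_1,x_2]$ modulo $I(g_0,g_1)$ as a $\Z_{p^n}$-combination of words $wv$ with $w$ an alternating word in positive powers of $x_0$ and $x_2$ and $v$ a power of $x_1$, then invoke Proposition~\ref{prop_A0} for $A_0$ and $A_1$ to see these are $\Z_{p^n}$-independent; but the argument above is shorter and less prone to ordering errors.
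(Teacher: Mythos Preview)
Your proof is correct and follows essentially the same route as the paper: construct the map $R\to A_{01}$ from the free-algebra universal property (using $g_0=g_1=0$ in $A_{01}$), construct the map $A_{01}\to R$ from the universal property of the amalgamated free product, and check they are mutually inverse on the generators $x_0,x_1,x_2$. Your write-up is in fact a bit more careful than the paper's, in that you spell out why the common factor $\Z_{p^n}[x_1]$ can be placed on the same side in both $A_0$ and $A_1$ and why both composites, not just one, are the identity.
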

\begin{proof}
By construction $g_0=g_1=0$ in $A_{01}$. So, the map $x_i\to x_i$ prolongs
to a surjective  homomorphism $\phi:\Z_{p^n}[x_0,x_1,x_2]/I(g_0,g_1)\to A_{01}$.
Using the universal property of $A_{01}$ define 
$\psi:A_{01}\to \Z_{p^n}[x_0,x_1,x_2]/I(g_0,g_1)$. Notice, that $\psi$ is surjective
as well ($\Z_{p^n}[x_0,x_1,x_2]/I(g_0,g_1)$ is generated by $x_i$). Check that 
$\psi \phi=id:A_{01}\to A_{01}$ by the universal property.
\end{proof}
Similarly, the algebras $A_2=\Z_{p^n}[x_2,x_3]/I(g_2)$,
$A_3=\Z_{p^n}[x_3,x_0]/I(g_3)$, and
$A_{23}=A_2\mathop{*}\limits_{\Z_{p^n}[x_3]}A_3=\Z[x_2,x_0]\bowtie\Z[x_3]$ may be constructed.
Notice, that there exist isomorphism $A_{01}\to A_{23}$ that sends
$x_0\to x_2$, $x_2\to x_0$, $x_1\to x_3$. Now, we may construct
$A=A_{01}\mathop{*}_{\Z_{p^n}[x_0,x_2]} A_{23}$ (we make isomorphism 
$\Z[x_0,x_2]$ of $A_{01}$ with $\Z[x_2,x_0]$ of $A_{23}$ sending $x_0\to x_0$ and 
$x_2\to x_2$). 
Theorem~\ref{th_algebra} follows from the lemma.
\begin{lemma}
$A=\Z_{p^n}[x_0,x_2]\bowtie\Z_{p^n}[x_1,x_3]$ is isomorphic to 
$\Z_{p^n}[x_0,x_1,x_2,x_3]/I(g_0,\dots,g_3)$.
\end{lemma}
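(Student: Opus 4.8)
The plan is to run the argument of Lemma~\ref{lm_A01} one more time, now with $A_{01}$ and $A_{23}$ in the roles of $A_0$ and $A_1$ and with $\Z_{p^n}[x_0,x_2]$ as the amalgamated subalgebra. Write $R=\Z_{p^n}[x_0,x_1,x_2,x_3]/I(g_0,\dots,g_3)$ for the algebra on the right. The first assertion, $A=\Z_{p^n}[x_0,x_2]\bowtie\Z_{p^n}[x_1,x_3]$, is immediate from the definition of $\mathop{*}\limits_{V}$ applied to $A_{01}=\Z_{p^n}[x_1]\bowtie\Z_{p^n}[x_0,x_2]$ and $A_{23}=\Z_{p^n}[x_3]\bowtie\Z_{p^n}[x_0,x_2]$ (the two orderings of a $\bowtie$ name the same algebra), together with $\Z_{p^n}[x_1]*\Z_{p^n}[x_3]=\Z_{p^n}[x_1,x_3]$; so only the isomorphism with $R$ needs work.

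First I would write down a surjection $\phi\colon R\to A$. By Exercise~4 the algebra $A=A_{01}\mathop{*}\limits_{\Z_{p^n}[x_0,x_2]}A_{23}$ contains isomorphic copies of $A_{01}$ and $A_{23}$ meeting in $\Z_{p^n}[x_0,x_2]$; and by Lemma~\ref{lm_A01} together with its analogue (shift the indices by $2\bmod 4$) we have $g_0=g_1=0$ in $A_{01}\subseteq A$ and $g_2=g_3=0$ in $A_{23}\subseteq A$. Hence $x_i\mapsto x_i$ extends to an algebra homomorphism $\Z_{p^n}[x_0,x_1,x_2,x_3]\to A$ that annihilates $I(g_0,\dots,g_3)$, i.e.\ to $\phi\colon R\to A$, and $\phi$ is onto because the $x_i$ generate $A$.

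Next I would build the would-be inverse $\psi\colon A\to R$ by gluing. In $R$ the relations $g_0=g_1=0$ hold, so $x_i\mapsto x_i$ ($i=0,1,2$) gives a surjection $\Z_{p^n}[x_0,x_1,x_2]/I(g_0,g_1)\twoheadrightarrow\langle x_0,x_1,x_2\rangle\subseteq R$; precomposing with the isomorphism of Lemma~\ref{lm_A01} produces $\theta_{01}\colon A_{01}\to R$, and likewise $\theta_{23}\colon A_{23}\to R$. Both $\theta_{01}$ and $\theta_{23}$ act as the identity on $x_0$ and $x_2$, hence agree on the amalgamated copy of $\Z_{p^n}[x_0,x_2]$ — and this is precisely the identification ($x_0\mapsto x_0$, $x_2\mapsto x_2$) along which $A_{01}$ and $A_{23}$ were amalgamated, not the ``swap'' isomorphism $A_{01}\to A_{23}$. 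By the universal property of the amalgamated free product (Exercise~4) they combine into a single $\psi\colon A\to R$, which is onto since $R=\langle x_0,x_1,x_2,x_3\rangle$.

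Finally I would check that $\phi$ and $\psi$ are mutually inverse. The composite $\psi\phi\colon R\to R$ fixes every $x_i$ and the $x_i$ generate $R$, so $\psi\phi=\mathrm{id}_R$; the composite $\phi\psi$ restricts on $A_{01}$ (resp.\ $A_{23}$) to a homomorphism fixing $x_0,x_1,x_2$ (resp.\ $x_0,x_2,x_3$), hence to the inclusion into $A$, and by the uniqueness clause of the universal property of $A=A_{01}\mathop{*}\limits_{\Z_{p^n}[x_0,x_2]}A_{23}$ this forces $\phi\psi=\mathrm{id}_A$. The only step that needs genuine attention is the bookkeeping: keeping straight the two occurrences of the pair $(x_0,x_2)$ and verifying that $\theta_{01}$ and $\theta_{23}$ agree with the amalgamating identification rather than with the swap isomorphism; everything else is formal manipulation of universal properties, the real content having been isolated already in Proposition~\ref{prop_A0}. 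With the lemma in hand, Theorem~\ref{th_algebra} follows.
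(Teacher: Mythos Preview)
Your proposal is correct and is exactly the argument the paper intends: the paper gives no explicit proof of this lemma, stating it immediately after Lemma~\ref{lm_A01} with the clear implication that one repeats that proof verbatim with $A_{01},A_{23}$ and $\Z_{p^n}[x_0,x_2]$ in place of $A_0,A_1$ and $\Z_{p^n}[x_1]$. You have faithfully carried this out, and your extra care in checking both composites and in distinguishing the amalgamating identification from the swap isomorphism only makes the write-up cleaner than the paper's sketch of Lemma~\ref{lm_A01}.
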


\section{Proof of Proposition~\ref{prop_A0}}\label{sub_propA0}

The proof is based on the fact that a polynomial $g_0$ forms a kind of
a Grobner basis (over non-commutative polynomials). We will not define what a Grobner
basis is for non-commutative polynomials. 
Instead, we directly apply a Knuth-Bendix algorithm \cite{Baader} to $\{g_0\}$.

Let $m(y_1,\dots,y_k)$ be a non-commutative monomial or, the same, a word in alphabet
$\{y_1,\dots,y_k\}$, that is, $m(y_1,\dots,y_k)=z_1z_2\dots z_r$, $z_i\in\{y_1,\dots,y_k\}$.
The product of two monomials is just the concatenation. 
A non-commutative polynomial $f$ over $\Z_{p^n}$ is  a ``linear'' combination of monomials
$f=\sum a_im_i$, $a_i\in\Z_{p^n}$.  The product $f_1f_2$ of polynomials is defined 
using the product of monomials by linearity.

Let us return to the study of $A_0=\Z_{p^n}[x_0,x_1]/I(g_0)$.
We call a polynomial $f\in \Z_{p^n}[x_0,x_1]$ left (resp. right) reduced if 
$f=\sum a_{i,j}x_0^ix_1^j$ (resp. $f=\sum a'_{i,j}x_1^ix_0^j$).
Proposition~\ref{prop_A0} is equivalent to the claim.

\begin{claim}\label{cl_1}
For any $f\in \Z_{p^n}[x_0,x_1]$ there exists a left (resp. right) reduced
polynomial $\tilde f$ such that $f-\tilde f\in I$. If $f\in I$ is left (right) reduced then $f=0$. 
\end{claim}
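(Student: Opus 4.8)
The plan is to run the Knuth–Bendix completion procedure on the single rule coming from $g_0$ and check that it terminates after producing only ``harmless'' critical pairs, so that the resulting rewriting system is confluent and terminating; the normal forms are then exactly the left-reduced (resp. right-reduced) polynomials, which is precisely the content of Claim~\ref{cl_1}. Concretely, recall from Eq.~(\ref{eq_g}) that $g_0=x_1x_0-x_0x_1+Q_0(x_1)+pQ_1(x_0,x_1)$, where $Q_0$ is a polynomial in $x_1$ alone of order $\ge 2$ and the $p$-term has all its coefficients divisible by $p$. Reading $g_0=0$ as a rewriting rule oriented so that the leading monomial $x_1x_0$ is replaced, one gets
\[
x_1x_0 \;\longrightarrow\; x_0x_1 - Q_0(x_1) - pQ_1(x_0,x_1),
\]
and the point is that the right-hand side is already left-reduced (it has no occurrence of $x_1$ to the left of $x_0$, except inside the $p$-term, which is handled by the nilpotence $p^n=0$ together with an induction on the $p$-adic valuation). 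So I would first set up the termination order: since every application of the rule either removes an ``inversion'' $x_1x_0$ or strictly increases the $p$-adic valuation of a monomial's coefficient (and $p^n=0$), a suitable well-founded order (lexicographic on number of inversions, then on valuation) makes the system terminating.

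Next I would compute the critical pairs (overlaps of the left-hand side $x_1x_0$ with itself). Up to the variable renaming $x_0\leftrightarrow x_1$ the only nontrivial overlap is the word $x_1x_1x_0$ — wait, more carefully: the left side is $x_1x_0$, so self-overlaps require the suffix of one copy to equal the prefix of another; $x_0$ versus $x_1$ share no common nonempty prefix/suffix, so in fact there is no self-overlap of $x_1x_0$ with $x_1x_0$. The only overlaps that can occur are with the \emph{derived} rules produced when reducing $Q_0(x_1)$ or the $p$-correction, but those right-hand sides involve only $x_0^ix_1^j$ monomials, on which the rule never applies. Hence there are no critical pairs to resolve, and the one-rule system is already (locally, hence globally, by Newman's lemma) confluent. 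The main obstacle I anticipate is bookkeeping the $p$-divisible tail $pQ_1$: after rewriting, new monomials of the form $x_0^a x_1 x_0^b x_1^c$ with a factor $p$ may reappear, and one must argue that the rewriting still terminates on them — this is exactly where the filtration by $p$-adic valuation and the finiteness of $\Z_{p^n}$ do the work, via an inner induction on $n$.

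Once confluence and termination are established, the claim follows: given $f\in\Z_{p^n}[x_0,x_1]$, rewrite to a normal form $\tilde f$; by construction $f-\tilde f\in I(g_0)$ and $\tilde f$ is left-reduced, i.e. a $\Z_{p^n}$-combination of $x_0^ix_1^j$. For the second assertion, suppose $f\in I(g_0)$ is left-reduced; then $f$ rewrites to $0$ (since $0\in I$ and normal forms are unique), but $f$ is already in normal form, so $f=0$. The right-reduced version is symmetric: orient the rule the other way, $x_1x_0\to$ (something right-reduced, i.e.\ in $x_1^ix_0^j$), using that $g_0$ also solves for $x_0x_1$ in terms of $x_1x_0$ and lower terms; the two completions run in parallel. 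This gives both the existence of $\tilde f$ and the injectivity statement, which together say precisely that $\{x_0^ix_1^j\}$ and $\{x_1^ix_0^j\}$ are each free $\Z_{p^n}$-bases of $A_0$ — that is, $A_0=\Z_{p^n}[x_0]\bowtie\Z_{p^n}[x_1]$.
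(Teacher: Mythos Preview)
Your plan is exactly the paper's: orient $g_0=0$ as the rule $x_1x_0\to x_0x_1-Q_0(x_1)-pQ_1(x_0,x_1)$, prove termination and local confluence, then apply Newman's (Diamond) lemma; the paper packages this as Proposition~\ref{prop_reduction} and deduces the Claim via two short lemmas corresponding to your last paragraph. Your confluence analysis is also correct and agrees with the paper: since the left-hand side $x_1x_0$ has no nontrivial self-overlap (its suffix $x_0$ is not its prefix $x_1$), two one-step reductions inside a single monomial sit at disjoint positions and visibly commute.

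The genuine gap is your termination order. You assert that every output term ``either removes an inversion $x_1x_0$ or strictly increases the $p$-adic valuation'', and propose the lexicographic order on (number of inversions, then valuation). This fails for the $Q_0$-contribution once the rule is applied \emph{in context}. Take $t=x_1x_0x_0$, which has two inversions; reducing the leftmost $x_1x_0$ produces, among others, the terms $-c_j\,x_1^{\,j}x_0$ coming from monomials $c_jx_1^{\,j}$ of $Q_0$. Such a term has the same valuation as $t$ (when $c_j$ is a unit) but $j$ inversions, so for $j\ge 2$ your order does not drop, and for $j\ge 3$ it goes up. (Your side remark that $Q_0$ has order $\ge 2$ is also unjustified: the linear coefficient of $Q_0$ is $(1-k)/p$, which is generally a unit.) Your inner induction on $n$ cannot rescue this, because the $Q_0$-terms carry no extra factor of $p$ and the same obstruction already appears over $\Z_p$.

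The paper repairs exactly this point by inserting a third invariant: $n_0(t)$, the number of occurrences of $x_0$ in the monomial. One then checks that the lexicographic triple $(|t|,\,n_0(t),\,def(t))$ strictly decreases on every output term: the $pQ_1$-terms drop $|t|$; the $Q_0(x_1)$-terms keep $|t|$ but lose the rewritten $x_0$, so $n_0$ drops by one; and the single $x_0x_1$-term keeps $|t|$ and $n_0$ but drops the defect by exactly one. Dickson's lemma on $\N^3$ then yields termination, after which your confluence argument and your derivation of both assertions of the Claim go through unchanged.
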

From this point we restrict ourselves to the left case. The right case may be considered 
similarly. 
Define the one step reduction based on equalities $g_0=0$:
$$
x_1x_0\sredr x_0x_{1}-Q_0(x_{1})-pQ_1(x_0,x_{1}) 
$$

Let $f,\tilde f\in \Z_{p^n}[x_0,x_1]$. 
\begin{itemize}
\item  $\tilde f$ is
a one step reduction of $f$  ($f\sredr\tilde f$) if an appearance 
of $x_1x_0$, in some monomial  of $f$ is changed according
to the above described rule; $\tilde f$ is the resulting polynomial (after applying 
associativity and linearity). 
\item $f$ is said to be terminal if no monomial of $f$ contains 
$x_1x_0$. It means that we are unable to apply
$\sredr$ to $f$.
\item We write $f\redr\tilde f$ ($\tilde f$ is a reduction of $f$) if there is 
a sequence $f_0=f,f_1,\dots,f_k=\tilde f$ such that $f_i\sredr f_{i+1}$ 
for $i=0,\dots,k-1$.
\item We write $f\redr\tilde f+$ if $f\redr\tilde f$ and 
$\tilde f$ is terminal.   
\end{itemize}
\begin{proposition}\label{prop_reduction}
\begin{itemize}
\item $f$ is terminal if and only if $f$ is left reduced.
\item There is no infinite sequence $f_0\sredr f_1\sredr f_2\dots$
\item For any non-terminal $f$ there exist a unique $\tilde f$ such 
that $f\redr\tilde f+$.
\end{itemize}
Two last items mean that any sequence $f\sredr f_1\sredr\dots$ of one step reduction 
terminates and the terminal polynomial depends only on $f$.
\end{proposition}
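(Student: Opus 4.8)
The plan is to treat $\{g_0\}$ as a one-rule rewriting system on the non-commutative ring $\Z_{p^n}[x_0,x_1]$, with rule $x_1x_0\sredr x_0x_1-Q_0(x_1)-pQ_1(x_0,x_1)$ applied to a single occurrence of $x_1x_0$ in a single monomial at a time, and to run the Knuth-Bendix / Bergman diamond-lemma recipe: verify that the left-hand side $x_1x_0$ admits no \emph{ambiguities} in the sense of the diamond lemma, then exhibit a well-founded termination measure, so that the diamond lemma delivers unique terminal reducts (Newman's lemma being the abstract-rewriting shadow of this). The first bullet needs no work: a word in $\{x_0,x_1\}$ contains the factor $x_1x_0$ iff it is not of the form $x_0^ix_1^j$, so a polynomial is terminal precisely when all of its monomials are left reduced.

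Termination, which underlies the last two bullets, is the crux, and the obvious measures fail: neither the word length nor the inversion count (the number of pairs consisting of an $x_1$ that precedes an $x_0$) decreases, because rewriting inside $ux_1x_0v$ creates the $Q_0$- and $Q_1$-terms $ux_1^jv$ and $ux_0x_1^jv$ with $j$ as large as the $x_1$-degree of $g_0$ --- which is at most $k$, coming from the expansion of $(1+px_1)^k$ --- so both quantities may go \emph{up} once $v$ already carries some $x_0$'s. The remedy I would use is an exponential weight. Put $B=k+1$ and, for a monomial $m$, set $\Phi(m)=\sum B^{c}$, summed over the occurrences of $x_1$ in $m$, where $c$ is the number of $x_0$'s lying to the right of that occurrence. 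Inspecting the step $ux_1x_0v\longrightarrow ux_0x_1^jv$ (and the $Q_0$-variant $ux_1^jv$ likewise): every $x_1$ sitting inside $u$ or $v$ still has exactly as many $x_0$'s to its right as before (even one fewer in the $Q_0$-terms), whereas the distinguished $x_1$ of the factor $x_1x_0$, which contributed $B^{1+a}$ with $a$ the number of $x_0$'s in $v$, is replaced by $j\le k$ occurrences of $x_1$, each contributing $B^{a}$, and $jB^{a}\le kB^{a}<B^{1+a}$; so every monomial produced from $m$ has strictly smaller $\Phi$. Now pass to the finite multiset $\{\Phi(m):m\in\operatorname{supp}(f)\}$ and order such multisets of natural numbers by the well-founded Dershowitz-Manna ordering: a single $\sredr$-step strictly lowers it, since it removes one copy of $\Phi(m)$ and inserts only strictly smaller values, while cancellation in $\Z_{p^n}$ merely deletes further monomials. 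Hence there is no infinite chain $f_0\sredr f_1\sredr\cdots$. Hitting on this weight and checking the inequality is, I expect, the only real difficulty.

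To finish, observe that the rewriting has no ambiguities: no proper nonempty suffix of $x_1x_0$ is a prefix of $x_1x_0$ (so there is no overlap), $x_1x_0$ has no occurrence in itself other than the trivial one (so there is no inclusion), and a one-rule system can have no others. By the diamond lemma, termination together with the absence of ambiguities forces every element to have a unique terminal reduct; equivalently, any two reduction sequences starting from $f$ end at the same left-reduced polynomial, which is the content of the last two bullets. As a closing remark, since $f\sredr\tilde f$ implies $f-\tilde f\in I(g_0)$, the terminal reduct of $f$ is its unique left-reduced representative modulo $I(g_0)$; in particular a left-reduced element of $I(g_0)$ reduces to $0$ while also equalling its own reduct, hence it is $0$ --- which is exactly Claim~\ref{cl_1}, i.e.\ Proposition~\ref{prop_A0}.
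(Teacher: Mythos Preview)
Your argument is correct. The first and third bullets are handled essentially as in the paper: terminality is transparently equivalent to being left reduced, and for uniqueness both you and the paper invoke Newman's lemma after observing that the single left-hand side $x_1x_0$ admits no self-overlap (the paper phrases this as the two occurrences sitting at positions $j,j{+}1$ and $k,k{+}1$ with $k>j{+}1$; you phrase it as ``no ambiguities''---same content).

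The termination argument, however, is genuinely different. The paper attaches to each term $t$ the triple $(|t|,n_0(t),\mathrm{def}(t))$ in lexicographic order, where $|t|=\min\{r:p^r t=0\}$, $n_0(t)$ is the number of $x_0$'s, and $\mathrm{def}(t)$ is the inversion count; the main term $x_0x_1$ drops $\mathrm{def}$, the $Q_0$-terms drop $n_0$, and the $pQ_1$-terms drop $|t|$ thanks to the explicit factor $p$. So the paper's measure genuinely uses that the coefficient ring is $\Z_{p^n}$. Your exponential weight $\Phi(m)=\sum B^{c}$ with $B=k+1$, together with the Dershowitz--Manna multiset order, sidesteps this: since every monomial on the right-hand side is of the form $x_1^{j}$ or $x_0x_1^{j}$ with $j\le k$ (which is indeed what one gets from expanding $(1+px_0)(1+px_1)^k$), the bound $jB^{a}<B^{1+a}$ is all you need, and the argument works over any coefficient ring. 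Your route is thus more general; the paper's is perhaps more elementary in that it stays within lexicographic order on $\N^3$ (plus K\"onig's lemma on the finitely branching reduction tree) and avoids multiset orderings.
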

We prove the proposition in Subsection~\ref{sub_prop}. Now let us show how 
Proposition~\ref{prop_reduction} implies Claim~\ref{cl_1}.
We use notation $f=\sredr f'$ (resp. $f=\redr f'$) to denote $f\sredr f'$ or 
$f=f'$ (resp. $f\redr f'$ or $f=f'$).
\begin{lemma}\label{lm_lin}
The map $f=\redr\tilde f+$ is linear, that is, if $f_1=\redr\tilde f_1+$ and
$f_2=\redr\tilde f_2+$ then $a_1f_1+a_2f_2=\redr a_1\tilde f_1+a_2\tilde f_2$, where
$a_1,a_2\in\Z_{p^n}$.  
\end{lemma}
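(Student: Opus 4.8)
The plan is to derive the lemma from the existence and uniqueness of terminal forms established in Proposition~\ref{prop_reduction}, plus one bookkeeping observation. Write $\tilde f$ for the terminal polynomial with $f=\redr\tilde f+$; by Proposition~\ref{prop_reduction} it exists, is unique, and $f$ is terminal if and only if $f$ is left reduced. First I would note that the left reduced polynomials form a $\Z_{p^n}$-submodule of $\Z_{p^n}[x_0,x_1]$ (they are the $\Z_{p^n}$-span of the monomials $x_0^ix_1^j$), so $a_1\tilde f_1+a_2\tilde f_2$ is again left reduced, hence terminal. By the uniqueness clause of Proposition~\ref{prop_reduction} it therefore suffices to show that $a_1f_1+a_2f_2=\redr a_1\tilde f_1+a_2\tilde f_2$, i.e. that $a_1\tilde f_1+a_2\tilde f_2$ is the terminal form of $a_1f_1+a_2f_2$.

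Next I would record the elementary shape of a single reduction: if $h\sredr h'$, then $h'=h-c\,ug_0v$, where $u,v$ are words and $c\ne0$ is the coefficient in $h$ of the monomial $ux_1x_0v$ being rewritten — indeed, rewriting that occurrence replaces $c\,u(x_1x_0)v$ by $c\,u\bigl(x_0x_1-Q_0(x_1)-pQ_1(x_0,x_1)\bigr)v=c\,u(x_1x_0)v-c\,ug_0v$. Telescoping along $f_i=\redr\tilde f_i+$ then gives $f_i-\tilde f_i=\sum_j c_{i,j}\,u_{i,j}g_0v_{i,j}$, a finite $\Z_{p^n}$-combination of elements of the form $\lambda\,ug_0v$. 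Hence $a_1f_1+a_2f_2$ differs from $a_1\tilde f_1+a_2\tilde f_2$ by finitely many such terms, and the whole lemma reduces to the following claim, which I would then apply to peel the terms off one at a time: for every polynomial $h$, every $\lambda\in\Z_{p^n}$ and all words $u,v$, the polynomials $h$ and $h+\lambda\,ug_0v$ have the same terminal form.

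For the claim, let $m=ux_1x_0v$ and let $\alpha$ be its coefficient in $h$, so its coefficient in $h+\lambda\,ug_0v$ is $\alpha+\lambda$ (using that $x_1x_0$ occurs in $g_0$ with coefficient $1$ and that $m$ is the leading monomial of $ug_0v$ in the term order underlying $\sredr$). If $\alpha\ne0$, rewriting the occurrence of $x_1x_0$ at the position $(u,v)$ in $h$ gives $h\sredr h-\alpha\,ug_0v$; likewise, if $\alpha+\lambda\ne0$ then $h+\lambda\,ug_0v\sredr(h+\lambda\,ug_0v)-(\alpha+\lambda)ug_0v=h-\alpha\,ug_0v$, the very same polynomial. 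Since $h\sredr h''$ forces $h$ and $h''$ to have the same terminal form (both reduce to it, by uniqueness in Proposition~\ref{prop_reduction}), it follows that $h$ and $h+\lambda\,ug_0v$ do too; the degenerate cases $\alpha=0$ or $\alpha+\lambda=0$ are even simpler, since then one of $h$, $h+\lambda\,ug_0v$ is a single $\sredr$-step (or equal, when $\lambda=0$) away from the other. Iterating across the finitely many terms in $a_1f_1+a_2f_2-(a_1\tilde f_1+a_2\tilde f_2)$ yields $a_1f_1+a_2f_2=\redr a_1\tilde f_1+a_2\tilde f_2$, as required. I expect this last claim to be the only real obstacle: because $\Z_{p^n}$ has zero divisors, adding $\lambda\,ug_0v$ can annihilate or create the head coefficient $\alpha$ of the rewritten monomial, so the short case split on whether $\alpha$ and $\alpha+\lambda$ vanish is genuinely needed; the rest is bookkeeping and appeals to Proposition~\ref{prop_reduction}.
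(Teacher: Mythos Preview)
Your argument is correct. The one point worth spelling out is that the coefficient of $u\,x_1x_0\,v$ in $u g_0 v$ is exactly $1$: since $u$ and $v$ are fixed words in a free algebra, $u w v = u\,x_1x_0\,v$ forces $w=x_1x_0$, and $x_1x_0$ occurs in $g_0$ with coefficient $1$ and nowhere else (from the expansion of $a_1a_0-a_0a_1^k$ one sees that $x_1x_0$ arises only from the left-hand side). Your parenthetical about a ``term order underlying $\sredr$'' is not quite how the paper sets things up, but it is not needed; the coefficient computation suffices.

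Your route is genuinely different from the paper's. The paper argues operationally: it follows an arbitrary reduction chain of $a_1f_1+a_2f_2$ and shows that every one-step reduction of the sum can be split as $f'=a_1f_1'+a_2f_2'$ with $f_i=\sredr f_i'$, then at termination handles the zero-divisor cases (a non-terminal $f_1'$ with $a_1f_1'$ terminal, or cancellation between $a_1\beta_1$ and $a_2\beta_2$) by further reducing the components without changing the sum. You instead observe that each $\sredr$-step subtracts an element $c\,ug_0v$, telescope to write the difference $a_1f_1+a_2f_2-(a_1\tilde f_1+a_2\tilde f_2)$ as a finite sum of such elements, and prove the single invariance claim that $h$ and $h+\lambda\,ug_0v$ share a terminal form. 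Both approaches need a short case split to deal with zero divisors in $\Z_{p^n}$. What your approach buys is that the invariance claim immediately yields Lemma~\ref{lm_ideal} as well (take $h$ left reduced and peel off the summands of $f\in I(g_0)$), whereas the paper derives Lemma~\ref{lm_ideal} from Lemma~\ref{lm_lin}; what the paper's approach buys is that it never needs the explicit coefficient of $x_1x_0$ in $g_0$.
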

\begin{proof}
Let $a_1f_1+a_2f_2\sredr f'$. It means that we apply reduction to a monomial $m$
of $a_1f_1+a_2f_2$. The monomial $m$ may appear in $f_1$, $f_2$, or 
in the both polynomials. In any case there exist $f'_1$ and $f'_2$ such that
$f_1=\sredr f_1'$, $f_2=\sredr f'_2$ and $f'=a_1f'_1+a_2 f'_2$.  
It follows by induction that if $a_1f_1+a_2f_2\redr f'$ then 
$f'=a_1f'_1+a_2f'_2$ for some $f'_1, f'_2$ such that $f_1=\redr f'_1$ and $f_2=\redr f'_2$.
Now suppose that $a_1f'_1+a_2f'_2$ is terminal but, say, $f'_1$ is not terminal. There
are two possibilities:
\begin{enumerate}
\item $a_1f'_1$ is terminal. In this case, collecting terminal monomials, we may write 
$f'_1=\alpha+\beta$ with $\alpha$ terminal 
and $a_1\beta=0$. In this case further reduction of $f'_1$ does not change $a_1f'_1$.
So, w.l.g. we may assume $f'_1$ to be terminal.
\item  $a_1f'_1$ is not terminal. In this case we may write 
$f'_i=\alpha_i+\beta_i$ with $\alpha_i$ terminal and $a_1\beta_1+a_2\beta_2=0$.
Now, apply the same reduction to $\beta_1$ and $\beta_2$, keeping 
the sum $a_1f'_1+a_2f'_2$ unchanged. 
\end{enumerate}
We are done by Proposition~\ref{prop_reduction}.
\end{proof}
\begin{lemma}\label{lm_ideal}
$f\redr 0+$ if and only if $f\in I(g_0,\dots,g_3)$.
\end{lemma}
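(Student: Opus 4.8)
The plan is to read off both implications from the single observation that one reduction step moves a polynomial within a coset of the ideal generated by $g_0$ (in the setting of this subsection $I=I(g_0)$ inside $\Z_{p^n}[x_0,x_1]$). Throughout I abbreviate the right-hand side of the reduction rule by $h=x_0x_1-Q_0(x_1)-pQ_1(x_0,x_1)$, so that $g_0=x_1x_0-h$ and the rule reads $x_1x_0\sredr h$.

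For the forward direction, I would argue as follows. If $f\sredr f'$, the step replaces one occurrence of the subword $x_1x_0$ inside a single monomial $m=u\,x_1x_0\,v$ of $f$ by $u\,h\,v$; hence, writing $\alpha\in\Z_{p^n}$ for the coefficient of $m$ in $f$, we have the polynomial identity $f-f'=\alpha\,u\,(x_1x_0-h)\,v=\alpha\,u\,g_0\,v$, which lies in $I$. An induction on the length of a reduction sequence then gives $f-\tilde f\in I$ whenever $f\redr\tilde f$; in particular $f\redr 0+$ forces $f=f-0\in I$.

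For the converse I would use linearity. If $f\in I$, write $f=\sum_i\alpha_i\,u_i\,g_0\,v_i$ with $\alpha_i\in\Z_{p^n}$ and $u_i,v_i$ monomials. By Lemma~\ref{lm_lin} and an induction on the number of summands it suffices to prove $u\,g_0\,v\redr 0+$ for arbitrary monomials $u,v$. Now $u\,g_0\,v=u\,x_1x_0\,v-u\,h\,v$; the word $u\,x_1x_0\,v$ is non-terminal, and the single step $u\,x_1x_0\,v\sredr u\,h\,v$ shows, by the confluence part of Proposition~\ref{prop_reduction}, that $u\,x_1x_0\,v$ and $u\,h\,v$ share one and the same terminal form $t$ (with $t=u\,h\,v$ in the degenerate case that $u\,h\,v$ is already terminal). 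Applying Lemma~\ref{lm_lin} with coefficients $1$ and $-1$ gives $u\,g_0\,v=(u\,x_1x_0\,v)-(u\,h\,v)\redr t-t=0$, and since $0$ is terminal this says exactly $u\,g_0\,v\redr 0+$, whence $f\redr 0+$.

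I do not anticipate a real obstacle here: termination, confluence and linearity of the terminal form were all secured upstream in Proposition~\ref{prop_reduction} and Lemma~\ref{lm_lin}, and the lemma then amounts to the bookkeeping facts that every single reduction step alters a polynomial by an element $\alpha\,u\,g_0\,v$ of $I$, and that conversely each generator $u\,g_0\,v$ of $I$ is a difference of two polynomials with equal terminal form. The only spot needing a moment's care is the degenerate case just mentioned, where $u\,h\,v$ is itself already terminal; Lemma~\ref{lm_lin} still applies, now with the trivial reduction $u\,h\,v\redr u\,h\,v+$.
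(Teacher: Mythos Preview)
Your proof is correct and follows the same line as the paper: the forward direction is exactly the paper's ``clear by construction'' observation that each step alters $f$ by an element of $I$, and for the converse both you and the paper expand $f\in I$ as a $\Z_{p^n}$-combination $\sum a_i m_i g_0 m'_i$ and then invoke Lemma~\ref{lm_lin} together with Proposition~\ref{prop_reduction}. The only cosmetic difference is that the paper disposes of each summand in a single line---noting that $m_i g_0 m'_i\sredr 0$ is itself a one-step reduction (reduce the monomial $m_i x_1x_0 m'_i$ inside $m_i g_0 m'_i$ and everything cancels)---whereas you separate $u g_0 v$ into $u x_1x_0 v$ and $u h v$ and appeal to confluence to match their terminal forms before subtracting; this is the same content unpacked a bit more carefully.
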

\begin{proof}
{\bf Only if.} It is clear by construction that $f\redr\tilde f$ implies that 
$f-\tilde f\in I$.\\
{\bf If.} Let $f\in I$. It means that $f=\sum \alpha_ig_0\beta_i$.
Applying associativity we may write $f=\sum a_im_ig_0m'_i$, where $m_i$ and $m'_i$
are monomials and $a_i\in\Z_{p^n}$. By construction, there is a one step reduction 
$m_ig_0m'_i\sredr 0$. We are done by 
Lemma~\ref{lm_lin} and Propostion~\ref{prop_reduction}. 
\end{proof}
\subsection{Proof of Proposition~\ref{prop_reduction}.}\label{sub_prop}
The first item of Proposition~\ref{prop_reduction} is straightforward.  
So we start with the proof of the second item
of the proposition. 
It is the most difficult part of the proposition and will be used 
for the proof of the third item.
\subsubsection{Proof of the second item.}
 
Let $t=a_jm(x_0,x_1)$ be a term (a monomial with a coefficient).
We are going to measure how an application of one step reduction makes
a term more close to a left reduced polynomial.
To this end we define:
\begin{itemize}
\item $|t|=\min\{k\in\N\;|\;p^kt=0\}$.
\item $n_0(t)$ -- number of $x_0$ in $t$, for example, $n_0(x_1^ix_0^j)=j$.
\item $def(t)$ -- the defect of $t$, the total number of pairs where $x_1$ appears
before $x_0$, for example, $def(x_1^jx_0^kx_1^rx_0^m)=jk+jm+rm$.  
\end{itemize}
To each term we associate the ordered triple $(|t|,n_0(t),def(t))$.
On the set of triple we consider lexicographical order:
$(\alpha,\beta,\gamma)<(\alpha',\beta',\gamma')$ iff 
\begin{itemize}
\item $\alpha<\alpha'$; or 
\item $\alpha=\alpha'$ and $\beta<\beta'$; or
\item $\alpha=\alpha'$, $\beta=\beta'$, and $\gamma<\gamma'$.
\end{itemize}  
Now one may check that 
$(|t|,n_0(t),def(t))>(|t_j|,n_0(t_j),def(t_j))$ if $t\redr\sum_j t_j$. We need the following result.
\begin{lemma}[Dickson] 
Any decreasing (with respect to lexicographic order) sequences in $\N^3$ is finite. 
\end{lemma}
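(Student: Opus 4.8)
The plan is to recognize this statement as the well-foundedness of the lexicographic order on $\N^3$, and to prove it by induction on the number of coordinates, reducing everything to the single elementary fact that $(\N,<)$ admits no infinite strictly decreasing sequence.

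First I would record the base fact: if $a_0>a_1>a_2>\cdots$ were an infinite strictly decreasing sequence in $\N$, then $a_i\le a_0-i$ for every $i$, which forces $a_i<0$ once $i>a_0$, a contradiction. The same estimate shows that a \emph{non-increasing} sequence $a_0\ge a_1\ge a_2\ge\cdots$ of natural numbers must be eventually constant, since it can strictly decrease at most $a_0$ times.

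Next I would prove, by induction on $d\ge 1$, that there is no infinite strictly decreasing sequence in $\N^d$ with respect to lexicographic order. The case $d=1$ is the base fact above. For the inductive step, suppose the claim holds for $d-1$ and let $t^{(0)}>t^{(1)}>t^{(2)}>\cdots$ be a strictly decreasing sequence in $\N^d$; write $t^{(i)}=(a_i,s^{(i)})$ with $a_i\in\N$ and $s^{(i)}\in\N^{d-1}$. By definition of the lexicographic order the first coordinates satisfy $a_0\ge a_1\ge a_2\ge\cdots$, so by the base fact there is an index $N$ with $a_i=a_N$ for all $i\ge N$. But then for $i\ge N$ the equality of first coordinates forces $s^{(i)}>s^{(i+1)}$ in the lexicographic order on $\N^{d-1}$, producing an infinite strictly decreasing sequence there and contradicting the inductive hypothesis. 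Hence no such sequence exists in $\N^d$, and taking $d=3$ gives the lemma.

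There is no serious obstacle here: the only point that needs a moment's care is the two-fold use of the elementary fact about $\N$ — once to rule out infinite strictly decreasing sequences outright, and once in the (logically equivalent, but separately convenient) form that a non-increasing sequence of naturals stabilizes, which is exactly what lets us ``peel off'' the first coordinate in the inductive step.
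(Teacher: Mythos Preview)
Your proof is correct. The paper itself does not prove this lemma at all; it simply states it as a known fact (attributed to Dickson) and immediately applies it, so there is no proof in the paper to compare yours against. Your argument by induction on the number of coordinates, peeling off the first coordinate once it stabilizes, is the standard and perfectly valid way to establish well-foundedness of the lexicographic order on $\N^d$.
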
  
Consider now the reduction process of $t$ as a tree: 
To each vertex we associate a term in such a way that in any reduction step the 
resulting polynomial is a sum of terms of the leafs of the tree. 
With root we associate $t$. For each reduction of term $t'$ in a leaf $l$ we  
connect the leaf $l$ with  new leafs with all terms appearing in the reduction.
Any descending path in this tree is finite by Dickson Lemma. This tree is $k$-regular
by construction, so the tree is finite and the reduction process terminates.       
\subsubsection{Proof of the third item.}
This uses the Newman's lemma, or Diamond lemma for reduction processes.

Suppose that on a set $X$ a reduction process $\cdot\sred{*}\cdot$ 
(just a relation on $X$) is defined.
Denote by $\red{*}$ it's transitive closure. We say that $x$ is terminal 
if there are no $y\in X$ such that $x\red{*}y$
As before, let $x\red{*}y+$ denotes $x\red{*}y$ and $y$ is terminal.
\begin{lemma}[Diamond lemma] 
Let $\sred{*}$ satisfies the following properties:
\begin{itemize}
\item Any sequence $x_1\sred{*}x_2\sred{*}...$ is finite.
\item $\sred{*}$ is locally confluent, that is, for any $x$, $y_1$ and $y_2$
such that $x\sred{*} y_1$ and $x\sred{*} y_2$ there exists $z\in X$ such that
$y_1\red{*}z$ and $y_2\red{*} z+$.
\end{itemize}
Then $\red{*}$ is globally confluent, that is, for any non-terminal $x$ there
exists unique $y$ such that $x\red{*} y$.
\end{lemma}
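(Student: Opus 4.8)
This is Newman's lemma, and the plan is the usual well-founded induction, specialised to the normal-form formulation asked for here. By the first hypothesis there is no infinite chain $x_{1}\sred{*}x_{2}\sred{*}\dots$, hence $\red{+}$ (one or more reduction steps) admits no infinite descending chain and is a well-founded relation on $X$; moreover every element reduces to \emph{some} terminal element. Write $\mathrm{nf}(x)$ for the set of terminal elements reachable from $x$, reading $\red{*}$ reflexively so that $\mathrm{nf}(t)=\{t\}$ when $t$ is terminal. Then $\mathrm{nf}(x)\neq\varnothing$ for every $x$, and the whole content of the lemma is that $\mathrm{nf}(x)$ is a singleton.

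I would prove ``$\mathrm{nf}(x)$ is a singleton'' by induction on $x$ with respect to $\red{+}$. If $x$ is terminal this holds by the convention above. If $x$ is non-terminal, then for each immediate successor $x'$ (i.e.\ $x\sred{*}x'$) we have $x\red{+}x'$, so the induction hypothesis gives $\mathrm{nf}(x')=\{m(x')\}$ for a single element $m(x')$. Since $x$ is not terminal, any terminal element reachable from $x$ is already reachable from one of its immediate successors; hence $\mathrm{nf}(x)$ is exactly the set of the $m(x')$, and it remains to show they all coincide. Given two immediate successors $x_{1},x_{2}$, apply local confluence to the single steps $x\sred{*}x_{1}$ and $x\sred{*}x_{2}$: this produces a \emph{terminal} $w$ with $x_{1}\red{*}w$ and $x_{2}\red{*}w$, so $w\in\mathrm{nf}(x_{1})=\{m(x_{1})\}$ and $w\in\mathrm{nf}(x_{2})=\{m(x_{2})\}$, forcing $m(x_{1})=w=m(x_{2})$. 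Therefore $\mathrm{nf}(x)$ is a single point, which completes the induction; in particular every non-terminal $x$ admits a unique terminal $y$ with $x\red{*}y$, i.e.\ $\red{*}$ is globally confluent in the stated sense.

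I do not expect a genuine obstacle here, only two points requiring care. First, one must peel off a single reduction step from $x$ before invoking the induction hypothesis, so that it is applied only at the strictly $\red{+}$-smaller elements $x'$; it is exactly the stated (rather than the weak) form of local confluence --- joinability to a \emph{terminal} element --- that then makes the argument this short, since $w$ is literally the normal form of $x_{1}$ and of $x_{2}$. (Without that one would run the textbook Newman argument: take $w$ with $x_{1}\red{*}w$, $x_{2}\red{*}w$, and complete two successive ``diamonds'' by a double application of the induction hypothesis through $w$.) Second, the finiteness hypothesis is used twice: to ensure $\mathrm{nf}(x)\neq\varnothing$, and to license the induction on $\red{+}$.

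In the intended application --- the third item of Proposition~\ref{prop_reduction} --- the finiteness hypothesis is precisely its second item, established via the lexicographically decreasing triples $(|t|,n_{0}(t),def(t))$ and Dickson's Lemma, while local confluence will be checked by a finite overlap analysis of the single rewrite rule $x_{1}x_{0}\sredr x_{0}x_{1}-Q_{0}(x_{1})-pQ_{1}(x_{0},x_{1})$.
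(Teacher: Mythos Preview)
The paper does not supply its own proof of this lemma: it is simply quoted as Newman's lemma (the Diamond lemma) from term-rewriting theory and then applied. So there is nothing to compare against; your sketch is a correct version of the standard well-founded-induction proof.

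Two remarks. First, you are right to notice that the paper's hypothesis is phrased in an unusually strong way --- the common reduct $z$ is required to be \emph{terminal} (that is the meaning of the trailing ``$+$'') --- and you correctly exploit this to shorten the induction step: with $z$ terminal, it is literally the normal form of both $y_1$ and $y_2$, so a single application of the induction hypothesis at each $y_i$ suffices. Second, and this is worth flagging: in the paper's own verification of local confluence for $\sredr$ (the paragraph just after the lemma), the common reduct $f_3$ produced there is \emph{not} terminal in general, so the ``$+$'' in the hypothesis is almost certainly a slip for the ordinary weak form of local confluence. Your parenthetical remark --- that without the ``$+$'' one runs the textbook double-diamond argument through $w$, using the induction hypothesis twice --- is therefore the version that actually matches the paper's application, and it would be worth promoting it from an aside to the main line of the proof.
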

So, in order to show Proposition~\ref{prop_A0} it suffices to  
check the second condition of the Diamond Lemma for $\sredr$. 
Let $f\sredr f_1$ and $f\sredr f_2$. If the reduction applies to a different terms
then existence of $f_3$, $f_1\sredr f_3$ and $f_2\sredr f_3$ is trivial.
It suffices to consider $f=ax_{i_1}...x_{i_m}$. Suppose w.l.g., that $f\sredr f_1$ is an
application of reduction to $x_{i_j}x_{i_{j+1}}=x_1x_0$ and  $f\sredr f_2$
$x_{i_k}x_{i_{k+1}}=x_1x_0$ for $k>j+1$. Then
$f_1=ax_{i_1}\dots x_{i_{j-1}}qx_{i_{j+2}}\dots x_{i_m}$ and $f_2=ax_{i_1}\dots x_{i_{k-1}}qx_{i_{k+2}}\dots x_{i_m}$, where
$q= x_0x_{1}-Q_0(x_{1})-pQ_1(x_0,x_{1})$. 
One may check that $f_1\redr f_3$ and $f_2\redr f_3$ for  
$f_3=ax_{i_1}\dots x_{i_{j-1}}qx_{i_{j+2}}\dots x_{i_{k-1}}qx_{i_{k+2}}\dots x_{i_m}$.   
\section{Proof of Lemma~\ref{lm_free}}\label{sec_lm_free}

Let $\Z(\bar x)$ be an algebra of power series with noncomutative (but associative) 
variables $\bar x=x_0,x_1,\dots,x_m$ over $\Z$. For $a,b\in\Z(\bar x)$ let 
$\lceil a,b\rceil =ab-ba$ and $\Lambda[\bar x]$ be a submodule of $\Z[\bar x]$ generated by
$\lceil \cdot,\cdot\rceil$ starting from $\bar x$. 
Let $\Lambda^j \subset\Lambda[\bar x]$ consist of uniform polynomials of order $j$. So,
$$
\Lambda[\bar x]=\cup_{j=0}^\infty \Lambda^j.   
$$
Let $I=I(\bar x)$ be a (two-sided) ideal in $\Z(\bar x)$ generated by $\bar x$. Clearly,
this ideal consists of polynomials without constant term. 
Let $G$ be a group. Notations $G_n$ and $G_{[n]}$ are defined in 
Section~\ref{sec_p-quotient}.  
\begin{theorem}[Magnus' theorem]\label{th_Magnus}
\begin{itemize}
\item Let $a_i=1+x_i$. The group $F=\langle a_i\rangle$ is a free group, freely
generated by $a_i$.  
\item $(1+I^n)\cap F=F_{[n]}$.
\item If $w \in F_{[n]}$ then $w=1+d+z$, where $d\in\Lambda^n$ and $z$ does not
contain terms of order $\leq$ $n$.
\item For any $d\in\Lambda^n$ there exists $z\in\Z[\bar x]$ without terms of order 
$\leq$ $n$ such that $1+d+z\in F_{[n]}$.
\end{itemize}
\end{theorem}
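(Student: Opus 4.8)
\section*{Proof proposal for Theorem~\ref{th_Magnus}}

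The plan is to treat the four assertions together, with the ``leading term'' map $d_n\colon F_{[n]}\to\Lambda^n$ (send $w$ to the degree-$n$ homogeneous part of $w-1$) as the organising device, and to isolate one classical fact — the faithfulness of the free Lie ring over $\Z$ inside the free associative ring — as the only real difficulty. First I would prove the first item directly by extracting a coefficient. If $w\neq 1$ is a reduced word in the $a_i^{\pm1}$, collect equal consecutive letters and write $w=a_{j_1}^{m_1}\cdots a_{j_s}^{m_s}$ with $j_t\neq j_{t+1}$ and $m_t\neq 0$. Substituting $a_i=1+x_i$ and expanding $(1+x_{j_t})^{m_t}=1+m_tx_{j_t}+\cdots$, the coefficient of the monomial $x_{j_1}x_{j_2}\cdots x_{j_s}$ in $w$ can only come from choosing the linear term of each factor — here one uses $j_t\neq j_{t+1}$ to rule out a factor supplying two or zero of the relevant $x$'s — so this coefficient equals $m_1m_2\cdots m_s\neq 0$. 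Hence $w\neq 1$ in $\Z(\bar x)$, i.e. $F$ is free on the $a_i$.

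Next I would set up the leading-term machinery. A short computation gives, for $u\in I^a$ and $v\in I^b$, that $[1+u,1+v]-1=(1+u)^{-1}(1+v)^{-1}\lceil u,v\rceil\in I^{a+b}$, and its degree-$(a+b)$ part is $\lceil u_a,v_b\rceil$, where $u_a,v_b$ are the leading parts of $u,v$. By induction this yields $F_{[n]}\subseteq 1+I^n$, and the rule $w\mapsto$ (degree-$n$ part of $w-1$) defines a homomorphism $d_n\colon F_{[n]}\to(\text{degree-}n\text{ part of }\Z(\bar x))$ with $F_{[n+1]}\subseteq\ker d_n$ (the cross term $(w_1-1)(w_2-1)$ lives in degree $\geq 2n$). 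Since $F_{[n]}$ is generated by commutators of weight $\geq n$, and by induction on weight such a commutator of weight $n$ is sent by $d_n$ to the corresponding iterated bracket of the $x_i$ while those of weight $>n$ are killed, the image of $d_n$ is exactly the $\Z$-span of degree-$n$ Lie monomials in the $x_i$, which is $\Lambda^n$. This simultaneously gives the third item ($w-1=d_n(w)+z$ with $d_n(w)\in\Lambda^n$ and $z$ of order $>n$) and, using $d_n(c^{-1})=-d_n(c)$ to reach negative coefficients, the fourth item (surjectivity of $d_n$ onto $\Lambda^n$).

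It remains to prove the second item; the inclusion $F_{[n]}\subseteq 1+I^n$ was just established, so the content is $F\cap(1+I^n)\subseteq F_{[n]}$. I would argue by induction on $n$ (trivial for $n=1$): given $w\in F$ with $w\in 1+I^{n+1}$, the inductive hypothesis gives $w\in F_{[n]}$, hence $d_n(w)=0$; by the collecting process write $w=\bigl(\prod_i c_i^{\alpha_i}\bigr)r$ with $c_i$ the weight-$n$ basic commutators and $r\in F_{[n+1]}$, so that $\sum_i\alpha_i\lambda_i=d_n(w)=0$, where $\lambda_i=d_n(c_i)\in\Lambda^n$. If the $\lambda_i$ are $\Z$-linearly independent in $\Z(\bar x)$, then all $\alpha_i=0$ and $w\in F_{[n+1]}$, completing the induction. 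The main obstacle is precisely this linear independence — equivalently, that $d_n$ induces an isomorphism $F_{[n]}/F_{[n+1]}\xrightarrow{\ \sim\ }\Lambda^n$, equivalently that the canonical map from the free Lie ring over $\Z$ into $\Z(\bar x)$ is injective (so that $\Lambda^n$ is free abelian of the Witt rank with the basic Lie monomials as a basis). This is the Magnus–Witt theorem; I would either cite it or prove it, e.g. via Lazard elimination or a Hall-basis argument exhibiting the free Lie ring over $\Z$ as a direct summand of the free associative ring. Granting that input, $d_n$ is bijective, which yields $F\cap(1+I^n)=F_{[n]}$ and with it the whole theorem.
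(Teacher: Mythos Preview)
The paper does not supply a proof of Theorem~\ref{th_Magnus}: it is stated as a classical result attributed to Magnus and then invoked as a black box in the proof of Theorem~\ref{th_Jacobson}. So there is no ``paper's own proof'' to compare against.

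That said, your outline is a correct rendition of the standard Magnus--Witt argument. The coefficient extraction for freeness (item~1) is exactly Magnus' original trick; the leading-term homomorphism $d_n\colon F_{[n]}\to\Lambda^n$ and the commutator computation $[1+u,1+v]-1=(1+u)^{-1}(1+v)^{-1}\lceil u,v\rceil$ are the right organising devices for items~3 and~4; and you have correctly isolated the one genuinely nontrivial ingredient for item~2, namely that the images $\lambda_i=d_n(c_i)$ of the weight-$n$ basic commutators are $\Z$-linearly independent in $\Z(\bar x)$ (equivalently, the embedding of the free Lie ring into the free associative ring). Two small remarks: first, your use of the collecting process to write $w\equiv\prod_i c_i^{\alpha_i}\pmod{F_{[n+1]}}$ is legitimate here since that step is purely combinatorial and does not presuppose what you are proving; second, if you want to avoid citing Magnus--Witt, the cleanest self-contained route is Witt's formula or a Hall-set argument showing $\Lambda^n$ is free of the Witt rank, which dovetails with the generating set you already have.
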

Consider homomorphism $\pi:\Z(\bar x)\to \Z(\bar x)$, defined by $\pi(x_i)=px_i$. Clearly,
$\pi(\Z(\bar x))=\Z(p\bar x)$. Also, $\pi(F)$ is an inclusion of a free
group $F$ into $\Z(p\bar x)$. For a two sided  ideal $J$ of $\Z(p\bar x)$ let
$N_j=\{w\in\pi(F)\;|\;w-1\in J\}$. 
\begin{lemma}\label{lm_ideal_normal}
$N_j\triangleleft\pi(F)$.
\end{lemma}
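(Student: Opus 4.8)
The plan is to show that $N_J$ is closed under conjugation by the generators $\pi(a_i) = 1+px_i$ of $\pi(F)$, which suffices since $N_J$ is clearly a subgroup (it is the kernel of the composite $\pi(F) \to \Z(p\bar x) \to \Z(p\bar x)/J$, once we know the target is a group under the induced multiplication — more cleanly, $N_J = \pi(F) \cap (1 + J)$, and $1+J$ is closed under multiplication and inversion inside the units of $\Z(p\bar x)$ because $J$ is a two-sided ideal). So the real content is conjugation-invariance. Let $w \in N_J$, so $w - 1 \in J$, and let $a = 1+px_i$ be a generator; I must check $a^{-1} w a - 1 \in J$. Write $a^{-1} w a - 1 = a^{-1}(w-1)a + (a^{-1}a - 1) = a^{-1}(w-1)a$. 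Since $J$ is a two-sided ideal and $a, a^{-1} \in \Z(p\bar x)$ (recall $a^{-1} = \sum_j (-p)^j x_i^j$ lies in $\Z(p\bar x)$, and in fact in the subring since it is a power series in $px_i$), the element $a^{-1}(w-1)a$ lies in $J$. Hence $a^{-1} w a \in N_J$, and the same computation with $a$ and $a^{-1}$ interchanged handles conjugation by $a^{-1}$.

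More carefully, for an arbitrary element $v \in \pi(F)$ one has $v^{-1} w v - 1 = v^{-1}(w-1)v$ by the same telescoping identity $v^{-1}wv - 1 = v^{-1}wv - v^{-1}v = v^{-1}(w-1)v$, and since $v \in \pi(F) \subset \Z(p\bar x)$ and $v^{-1} \in \pi(F) \subset \Z(p\bar x)$ as well (every element of a group lies in $\pi(F)$ together with its inverse), the two-sided ideal property of $J$ gives $v^{-1}(w-1)v \in J$ directly, so there is no need to reduce to generators at all. The one point that genuinely needs the ambient ring structure is that $\pi(F)$ consists of units of $\Z(p\bar x)$ with inverses again in $\Z(p\bar x)$; this is immediate from $\pi(F)$ being a group under the ring multiplication and $\pi(a_i)^{-1} = \sum_{j \ge 0}(-p)^j x_i^j$ being an honest element of $\Z(p\bar x)$.

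I expect no serious obstacle: the statement is essentially the observation that $1+J$ is a normal subgroup of the unit group of any ring when $J$ is a two-sided ideal, intersected with the subgroup $\pi(F)$. The only thing to be a little careful about is not to conflate the additive and multiplicative structures — the set $N_J$ is defined by the \emph{additive} condition $w - 1 \in J$ but is a \emph{multiplicative} subgroup, and the bridge between the two is exactly the identity $v^{-1}wv - 1 = v^{-1}(w-1)v$ together with the fact that $J$ absorbs multiplication on both sides. I would state this identity explicitly and then invoke the two-sided ideal hypothesis, and that is the whole proof.
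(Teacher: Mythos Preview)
Your argument is correct: the identity $v^{-1}wv-1=v^{-1}(w-1)v$ together with $J$ being two-sided immediately gives conjugation-invariance, and closure under products and inverses follows from $(1+a)(1+b)-1=a+b+ab\in J$ and $w^{-1}-1=-w^{-1}(w-1)\in J$. The paper states this lemma without proof, treating it as an easy observation, so your write-up simply supplies the routine verification the paper omits.
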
 
Clearly, $\Z(\bar x)/p^n\Z(\bar x)\equiv \Z_{p^n}(\bar x)$. Denote 
$(p^n)=p^n\Z(\bar x)\cap \Z(\bar px)$. Notice, that $\Z_{p^n}(p\bar x)=\Z_{p^n}[p\bar x]$. 
Now, Lemma~\ref{lm_free} is a consequence of
the following theorem.
\begin{theorem}[Jacobson, \cite{Jacobson}]\label{th_Jacobson}
$N_{(p^n)}=\pi(F_n)$
\end{theorem}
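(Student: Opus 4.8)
The plan is to prove the two inclusions $\pi(F_n)\subseteq N_{(p^n)}$ and $N_{(p^n)}\subseteq\pi(F_n)$ separately.

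For $\pi(F_n)\subseteq N_{(p^n)}$, by Exercise~1 it suffices to check that $\pi(v)^{p^j}-1\in(p^n)$ whenever $v\in F_{[i]}$ and $i+j=n$. Magnus' theorem (Theorem~\ref{th_Magnus}) writes $v=1+d+z$ with $d\in\Lambda^i$ and $z$ of order $>i$, so $\pi(v)-1\in p^{i}\Z(\bar x)\cap\Z(p\bar x)=(p^{i})$. Expanding $\pi(v)^{p^j}-1=(1+u)^{p^j}-1=\sum_{m\ge1}\binom{p^j}{m}u^m$ with $u\in(p^{i})$, and using $v_p\!\left(\binom{p^j}{m}\right)=j-v_p(m)$ together with $u^m\in(p^{mi})$, one sees that $\binom{p^j}{m}u^m\in(p^{\,j-v_p(m)+mi})\subseteq(p^{n})$, the only estimate needed being $(m-1)i\ge v_p(m)$ — trivial for $m=1$ and, for $m\ge2$, a consequence of $i\ge1$ and $v_p(m)\le\log_2 m\le m-1$. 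Since $(1+a)(1+b)-1\in(p^n)$ for $a,b\in(p^n)$, all of $\pi(F_n)$ lands in $N_{(p^n)}$.

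For the reverse inclusion I would first rewrite it. Expanding $v\in F$ in its Magnus series $v=1+\sum_\alpha c_\alpha x^\alpha$, the condition $\pi(v)-1\in(p^n)$ says exactly that $p^{\,n-|\alpha|}\mid c_\alpha$ for all $|\alpha|\le n$, i.e.\ that $v\in1+\mathfrak m^{n}$, where $\mathfrak m:=p\Z(\bar x)+I$ and $\mathfrak m^{n}=\sum_{k=0}^{n}p^{\,n-k}I^{k}$. As $\pi$ is injective on $F$ (there it agrees with the automorphism $x_i\mapsto px_i$ of $\Q(\bar x)$, which carries the free group $\langle1+x_i\rangle$ onto $\langle1+px_i\rangle$), the theorem is equivalent to the ``$p$-dimension-subgroup'' identity $F\cap(1+\mathfrak m^{n})=F_n$. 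Knowing $F_n\subseteq F\cap(1+\mathfrak m^n)$ already, I would prove the converse by induction on $n$: if $v\in F\cap(1+\mathfrak m^{n+1})$ then $v\in F_n$ by the inductive hypothesis, so it suffices to show that the leading-term map
$$
\bar\mu_n:\ F_n/F_{n+1}\longrightarrow\mathfrak m^{n}/\mathfrak m^{n+1},\qquad vF_{n+1}\longmapsto(v-1)+\mathfrak m^{n+1}
$$
is injective (it is a well-defined homomorphism of $\mathbb F_p$-vector spaces because $(1+a)(1+b)-1\equiv a+b\pmod{\mathfrak m^{2n}}$).

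To prove $\bar\mu_n$ injective I would pass to the $\mathfrak m$-adic associated graded ring. A direct computation gives $\mathrm{gr}_{\mathfrak m}\Z(\bar x)=\mathbb F_p[\epsilon]\langle x_0,\dots,x_m\rangle$, the free associative $\mathbb F_p$-algebra in which $\epsilon$ (the class of $p$) is central of degree $1$; and $\bigoplus_n F_n/F_{n+1}$ sits inside it as the graded $\mathbb F_p$-subspace generated, under Lie bracket and the degree-raising Frobenius $\phi\colon x\mapsto(\text{class of }x^p)$, by the classes $\bar x_i$ of $a_i-1$. Using Magnus' theorem (the classes of the basic commutators of weight $i$ give an $\mathbb F_p$-basis of $\Lambda^i\otimes\mathbb F_p$, and $\Lambda^i\otimes\mathbb F_p\hookrightarrow\mathbb F_p\langle\bar x\rangle_i$) together with Exercise~1, one identifies $\bigoplus_n F_n/F_{n+1}$ with the \emph{free restricted Lie algebra over $\mathbb F_p[\epsilon]$} on $m+1$ generators — $\phi$ playing the role of the restriction, with the usual characteristic-$2$ correction $\phi(x+y)=\phi(x)+\phi(y)+[x,y]$ — and $\mathrm{gr}_{\mathfrak m}\Z(\bar x)$ with its restricted enveloping algebra. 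Jacobson's Poincaré–Birkhoff–Witt theorem for restricted Lie algebras (\cite{Jacobson}) then says this enveloping-algebra map is injective on the Lie part, which is exactly the injectivity of $\bar\mu_n$ in every degree.

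I expect the last identification to be the main obstacle: one must verify that the relations imposed on the generators $\bar x_i$ by the $p$-central series are \emph{precisely} the defining relations of the free restricted Lie $\mathbb F_p[\epsilon]$-algebra, i.e.\ compute $\phi$ on $\mathrm{gr}(F)$ correctly (the $p=2$ term is the delicate point) and use the basic-commutator basis to see that nothing more is forced. A more hands-on route avoiding this language is to show directly that $F_n/F_{n+1}$ is spanned by the classes of the elements $c^{p^{\,n-i}}$ for $c$ a basic commutator of weight $i\le n$, and that their images under $\bar\mu_n$ are linearly independent because the top $\epsilon$-component of $\bar\mu_n(c^{p^{\,n-i}})$ is $\epsilon^{\,n-i}\bar d_c$ with the $\bar d_c$ independent in $\Lambda^i\otimes\mathbb F_p$; here the bookkeeping of the lower-order correction terms (again essentially the $p=2$ phenomenon) is what requires care.
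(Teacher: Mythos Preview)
Your argument is essentially correct, but it follows a genuinely different route from the paper's. Both proofs share the easy inclusion $\pi(F_n)\subseteq N_{(p^n)}$ and the inductive reduction of the converse to showing that no $w\in\pi(F_n)\setminus\pi(F_{n+1})$ can lie in $N_{(p^{n+1})}$; they diverge at this point. The paper works element by element: given such a $w$, it locates the unique $i$ with $w\in\pi(F_{[i]})\setminus\pi(F_{[i+1]})$, reads off from Magnus' theorem a leading Lie term $p^{j}d_i(p\bar x)$ with $d_i\in\Lambda^i$, uses the \emph{surjectivity} clause of Theorem~\ref{th_Magnus} to produce $u\in\pi(F_{[i]})$ with that same leading term, peels it off as $w=u^{p^{j}}w_1$ with $w_1$ of higher Lie weight, and iterates until what remains lies in $\pi(F_{[n+1]})$; the hypothesis $w\notin\pi(F_{n+1})$ then forces some exponent pair to satisfy $i_r+j_r=n$, so one of the leading terms survives modulo $(p^{n+1})$. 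Your approach instead packages the whole inductive step as injectivity of the graded map $\bar\mu_n:F_n/F_{n+1}\to\mathfrak m^{n}/\mathfrak m^{n+1}$ and appeals to the Lazard/Quillen picture (free restricted Lie algebra inside its restricted enveloping algebra, with PBW). This is more conceptual and ties the statement to the modular dimension-subgroup problem, but it imports exactly the machinery you flag as the ``main obstacle'': identifying $\bigoplus F_n/F_{n+1}$ with the correct free object over $\mathbb F_p$ (with the $p=2$ deviation) is itself a theorem of roughly the same depth as what is being proved. The paper's peeling argument avoids that identification entirely --- it uses only the four clauses of Theorem~\ref{th_Magnus} and never names a restricted Lie algebra --- at the cost of being less structural. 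Your ``hands-on route'' in the last paragraph (basic commutators $c$, images $\epsilon^{\,n-i}\bar d_c$) is in fact very close to what the paper does, just organised around a basis rather than a one-term-at-a-time decomposition.
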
 
\begin{proof}
We present here the Jacobson proof (see \cite{Jacobson}) which is a reduction to 
Theorem~\ref{th_Magnus}. In \cite{Jacobson} the definition of $(p^n)$ is different and
not equivalent of ours. But the proof in \cite{Jacobson} is, actually, 
for our definition of $(p^n)$.

Notice, that $(1+p^m w\dots)^p=1+p^{m+1}\dots$ and 
$[(1+p^mw\dots),(1+p^ku\dots)]=1+p^{m+k}(wu-uw)\dots$ 
where the omitted terms are of higher $p$-order. This implies that
$N_{(p^m)}^p\subseteq N_{(p^{m+1})}$ and $[N_{(p^m)},N_{(p^k)}]\subseteq N_{(p^{m+1})}$.
Consequently, we have $\pi(F_n)\subseteq N_{(p^n)}$ and $N_{(p^{n+1})}\subseteq N_{(p^n)}$. 

According to 
\cite{Jacobson} we show the equality $N_{(p^n)}=\pi(F_n)$ by induction.
By definition, $N_{(p^1)}=\pi(F_1)=\pi(F)$. Suppose, that $N_{(p^n)}=\pi(F_n)$.  
Then we know that $\pi(F_{n+1})\subseteq N_{(p^{n+1})}\subseteq \pi(F_n)$. So, it suffices
to show that $\pi(F_n)\cap\pi(F_{n+1})\supseteq N_{(p^{n+1})}\cap\pi(F_n)$, or, the same,
to prove that if $w\in\pi(F_n)\setminus\pi(F_{n+1})$ then $w\not\in N_{(p^{n+1})}$.
Let $w\in\pi(F_n)\setminus\pi(F_{n+1})$. There exists a unique $i$ such that 
$w\in \pi(F_{[i]})\setminus\pi(F_{[i+1]})$. By Theorem~\ref{th_Magnus} 
$w=1+p^jd_i(p\bar x)+z$ where $d_i\in \Lambda^i$, $z$ has $\bar x$-order more than $i$. 
Also we have 
that $i+j\geq n$ (as $w\in N_{(p^n)}$). Applying once again Theorem~\ref{th_Magnus}
we find $u=1+d_i(p\bar x)+z'\in\pi(F_{[i]})$, where the $\bar x$-order of $z'$ is more 
than $i$. So, $w=u^{p^j}w_1$, where $w_1\in \pi(F_{[i']})$ for $i'>i$. 
Repeating this procedure
one gets $w=u_1^{p^{j_1}}u_2^{p^{j_2}}\dots u_k^{p^{j_k}}w_k$, where  $w_k\in \pi(F_{[n+1]})$ and
$u_r=(1+d_{i_r}(p\bar x)\dots)$ with $d_{i_r}\in\Lambda^{i_r}$ (term of higher order in
$\bar x$ are omitted) and $i_r+j_r\geq n$. Now, $u_r\in \pi(F_{[i_r]})$ and, consequently, 
$u_r^{p^{j_r}}\in \pi(F_{[i_r]}^{p^{j_r}}\subseteq F_{i_r+j_r})$. If $\forall r\;\;i_r+j_r>n$
then $w\in \pi(F_{n+1})$, so, by our assumptions, $i_r+j_r=n$ for some $r$. It implies
that $w=u_1^{p^{j_1}}\dots u_k^{p^{j_k}}w_k=
(1+p^{j_1}d_{i_1}(p\bar x)+p^{j_2}d_{i_2}(p\bar x)\dots)\not\in N_{(p^{n+1})}$.  
\end{proof}
Let $F=\langle a_0,\dots,a_m\rangle$ be a free group on $\{a_0,\dots,a_m\}$. 
Let $\Z_{p^n}[F/F_n]$ be a group algebra of $F/F_n$ over $\Z_{p^n}$.
Theorem~\ref{th_Jacobson}
implies that there exists unique homomorphism $\phi:\Z_{p^n}[F/F_n]\to \Z_{p^n}[p\bar x]$
such that $\phi(a_i)=1+px_i$. 
(Here we, abusing notation, denote by the same symbol $a_i$ its image in $F/F_n$.)
Moreover, $(\ker(\phi)+1)\cap F/F_n=\{1\}$. Still $\ker(\phi)$ is not trivial.
For example, if $w\in F/F_n$ and $w^{p^j}=1$ then $p^j(w-1)\in\ker(\phi)$. 
What is the structure of $\ker(\phi)$? For example, is it true that 
$\ker(\phi)$ is generated by $\{p^j(w-1)\;|\;w\in F/F_n,\;w^{p^j}=1\}$?

\end{document}